\newtheorem{theorem}{Theorem}[section]
\newtheorem{remark}[theorem]{Remark}
\newtheorem{corollary}[theorem]{Corollary}
\theoremstyle{theorem}
\newtheorem{lemma}[theorem]{Lemma}
\theoremstyle{definition} 
\numberwithin{equation}{section} \makeatletter
\title[Some Inequalities for polar derivatives]{Inequalities for the polar derivative of a polynomial}
\author{N. A. Rather and Suhail Gulzar}
 \address{Department of Mathematics \\
    University of Kashmir \\
   Srinagar, Hazratbal 190006
   \\ India}
 \email{drnarather@gmail.com}
 \email{sgmattoo@gmail.com}
\date{}
\begin{document}
\subjclass[2000]{ 30A10, 30C10, 30E10.}
\keywords{ Polynomials; Inequalities in the complex domain; Polar derivative; Bernstein's inequality.}
\maketitle{}
\begin{abstract}
 Let $ P(z) $ be a polynomial of degree $ n $ and for any real or complex number $\alpha,$ let $D_\alpha P(z)=nP(z)+(\alpha-z)P^{\prime}(z)$ denote the polar derivative with respect to $\alpha.$ In this paper, we obtain generalizations of some inequalities for the polar derivative of a polynomial.
\end{abstract}
\begin{center}
\section{\bf Introduction and statement of results}
\end{center}
\hspace{5mm} Let $P(z) $ be a polynomial of degree $n,$ then
\begin{equation}\label{e1}
\underset{\left|z\right|=1}{Max}\left|P^{\prime}(z)\right|\leq  n\underset{\left|z\right|=1}{Max}\left|P(z)\right|.
\end{equation} 
 Inequality \eqref{e1} is an immediate consequence of S. Bernstein's Theorem on the derivative of a trigonometric polynomial (for reference, see \cite[p.531]{mm}, \cite[p.508]{rs} or \cite{asc}) and the result is best possible with equality holding for the polynomial $P(z)=az^n,$ $a\neq 0.$ \\
\indent  If we restrict ourselves to the class of polynomials having no zero in $|z|<1$, then inequality \eqref{e1} can be replaced by
\begin{equation}\label{e2}
\underset{\left|z\right|=1}{Max}\left|P^{\prime}(z)\right|\leq        \frac{n}{2}\underset{\left|z\right|=1}{Max}\left|P(z)\right|.
\end{equation}
Inequality \eqref{e2} was conjectured by Erd\"{o}s and later verified by Lax \cite{el}. The result is sharp and equality holds for $P(z)=az^n+b,$ $|a|=|b|.$\\
\indent  For the class polynomials  having all zeros in $|z|\leq 1,$ it was proved by Tur\'{a}n \cite{t} that
\begin{equation}\label{e3}
\underset{\left|z\right|=1}{Max}\left|P^{\prime}(z)\right|\geq        \frac{n}{2}\underset{\left|z\right|=1}{Max}\left|P(z)\right|.
\end{equation}
The inequality \eqref{e3} is best possible and become equality for polynomial $P(z)=(z+1)^n.$
As an extension of \eqref{e2} and \eqref{e3} Malik \cite{m} proved that if $P(z)\neq 0$ in $|z|<k$ where $k\geq 1,$ then
\begin{equation}\label{e4}
\underset{\left|z\right|=1}{Max}\left|P^{\prime}(z)\right|\leq        \frac{n}{1+k}\underset{\left|z\right|=1}{Max}\left|P(z)\right|,
\end{equation}
where as if $P(z)$ has all its zeros in $|z|\leq k$ where $k\leq 1,$ then
\begin{equation}\label{e5}
\underset{\left|z\right|=1}{Max}\left|P^{\prime}(z)\right|\geq        \frac{n}{1+k}\underset{\left|z\right|=1}{Max}\left|P(z)\right|.
\end{equation}
\indent Let $D_\alpha P(z)$ denotes the polar derivative of the polynomial $P(z)$ of degree $n$ with respect to the point $\alpha,$ then
$$D_\alpha P(z)=nP(z)+(\alpha-z)P^{\prime}(z). $$
The polynomial $D_\alpha P(z)$ is a polynomial of degree at most $n-1$ and it generalizes the ordinary derivative in the sense that
$$\underset{\alpha\rightarrow\infty}{Lim}\left[\dfrac{D_\alpha P(z)}{\alpha}\right]=P^{\prime}(z). $$
Now corresponding to a given $n^{th}$ degree polynomial $P(z),$ we construct a sequence of polar derivatives
\begin{equation*}
D_{\alpha_1}P(z)=nP(z)+(\alpha_1-z)P^{\prime}(z)=P_1(z)
\end{equation*}
\begin{align*}
D_{\alpha_s}D_{\alpha_{s-1}}\cdots D_{\alpha_2}D_{\alpha_1}P(z)=(n-s+1)&\left\{D_{\alpha_{s-1}}\cdots D_{\alpha_2}D_{\alpha_1}P(z)\right\}\\&+(\alpha_s-z)\left\{D_{\alpha_{s-1}}\cdots D_{\alpha_2}D_{\alpha_1}P(z)\right\}^{\prime}.
\end{align*}
The points $\alpha_1,\alpha_2,\cdots,\alpha_s,$  $s=1,2,\cdots,n,$ may be equal or unequal complex numbers.
The $s^{th}$ polar derivative $D_{\alpha_s}D_{\alpha_{s-1}}\cdots D_{\alpha_2}D_{\alpha_1}P(z)$ of $P(z)$ is a polynomial of degree at most $n-s.$ For $$P_j(z)=D_{\alpha_j}D_{\alpha_{j-1}}\cdots D_{\alpha_2}D_{\alpha_1}P(z),$$  we have
\begin{align*}
P_{j}(z)&=(n-j+1)P_{j-1}(z)+(\alpha_j-z)P^{\prime}_{j-1}(z),\,\,\,\,\,\,j=1,2,\cdots,s,\\
P_{0}(z)&=P(z).
\end{align*}
\indent A. Aziz \cite{a88} extended the inequality \eqref{e2} to the $s^{th}$ polar derivative by proving 
if $P(z)$ is a polynomial of degree $n$ not vanishing in $|z|<1,$ then for $|z|\geq 1$
\begin{align}\label{ae}
|D_{\alpha_s}\cdots &D_{\alpha_1}P(z)|\leq \dfrac{n(n-1)\cdots(n-s+1)}{2}\left\{|\alpha_1\cdots\alpha_sz^{n-s}|+1\right\}\underset{|z|=1}{Max}|P(z)|,
\end{align}
where $|\alpha_j|\geq 1,$ for $j=1,2,\cdots,s.$ The result is best possible and equality holds for the polynomial $P(z)=(z^n+1)/2.$\\

As a refinement of inequality \eqref{ae}, Aziz and Wali Mohammad \cite{aw} proved if $P(z)$ is a polynomial of degree $n$ not vanishing in $|z|<1,$ then for $|z|\geq 1$
\begin{align}\label{awe}\nonumber
|D_{\alpha_s}\cdots D_{\alpha_1}P(z)|\leq \dfrac{n(n-1)\cdots(n-s+1)}{2}&\Big\{\left(|\alpha_1\cdots\alpha_sz^{n-s}|+1\right)\underset{|z|=1}{Max}|P(z)|,\\ \,\,\,\,\,\,&-\left( |\alpha_1\cdots\alpha_sz^{n-s}|-1  \right)\Big\}\underset{|z|=1}{Min}|P(z)|
\end{align}
where $|\alpha_j|\geq 1,$ for $j=1,2,\cdots,s.$ The result is best possible and equality holds for the polynomial $P(z)=(z^n+1)/2.$ \\

\indent In this paper, we shall obtain several inequalities concerning the polar derivative of a polynomial and thereby obtain compact generalizations of inequalities \eqref{ae} and \eqref{awe}.\\

\indent We first prove following result from which certain interesting results follow as special cases.

\begin{theorem}\label{t1}
If $ F(z)$ be a polynomial of degree $n$ having all its zeros in the disk $\left|z\right|\leq k$ where $k\leq 1$, and $P(z)$ is a polynomial of degree n such that
\begin{equation*}
\left|P(z)\right|\leq \left|F(z)\right| \,\,\, for\,\,\, |z| = k,
\end{equation*}
 then for $\alpha_j,\beta\in\mathbb{C}$  with $ \left|\alpha_j\right|\geq k,\left|\beta\right|\leq 1 $, $j=1,2,\cdots,s$ and  $|z|\geq 1$,  
\begin{align}\label{te1}
\left|z^sP_s(z)+\beta\dfrac{n_s \Lambda_s}{(1+k)^s}P(z)\right|\leq \left|z^sF_s(z)+\beta\dfrac{n_s \Lambda_s}{(1+k)^s}F(z)\right|,
\end{align}
where
\begin{align}\label{te1'}
 n_s=n(n-1)\cdots(n-s+1) \,\,\&\,\,\Lambda_s=(|\alpha_1|-k)(|\alpha_2|-k)\cdots(|\alpha_s|-k).
\end{align}
\end{theorem}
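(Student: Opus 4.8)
The plan is to deduce the inequality from two ingredients: a location statement for the zeros of the one-parameter family $F+\lambda P$, and a sharp lower bound (a Tur\'an--Malik type estimate) for iterated polar derivatives of a polynomial whose zeros lie in $|z|\le k$. Write $c:=n_s\Lambda_s/(1+k)^s$ and set $\Phi(z)=z^sP_s(z)+\beta c\,P(z)$, $\Psi(z)=z^sF_s(z)+\beta c\,F(z)$, so that \eqref{te1} is exactly $|\Phi(z)|\le|\Psi(z)|$ for $|z|\ge1$. I would prove this in the standard way: it suffices to show that for every $\lambda$ with $|\lambda|<1$ the polynomial $\Psi(z)+\lambda\Phi(z)$ has no zero in $|z|>1$, for if $|\Phi(z_0)|>|\Psi(z_0)|$ at some $|z_0|>1$ then $\lambda=-\Psi(z_0)/\Phi(z_0)$ would furnish such a zero, and the boundary $|z|=1$ is then covered by continuity. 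Because the polar derivative is linear in the underlying polynomial, $\Psi+\lambda\Phi=z^s\widetilde G_s(z)+\beta c\,\widetilde G(z)$, where $\widetilde G:=F+\lambda P$ and $\widetilde G_s=D_{\alpha_s}\cdots D_{\alpha_1}\widetilde G$.

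First I would locate the zeros of $\widetilde G$. Since $F$ has all its zeros in $|z|\le k$ and $|P|\le|F|$ on $|z|=k$, the function $P/F$ is analytic and bounded in $|z|\ge k$ (both polynomials have degree $n$, so it is finite at $\infty$) and has modulus $\le1$ on $|z|=k$; the maximum modulus principle for the exterior region gives $|P(z)|\le|F(z)|$ for all $|z|\ge k$. Consequently, if $|z_0|>k$ then $|\lambda P(z_0)|<|P(z_0)|\le|F(z_0)|$ and $F(z_0)\ne0$, so $\widetilde G(z_0)=F(z_0)+\lambda P(z_0)\ne0$; hence $\widetilde G$ has all $n$ of its zeros in $|z|\le k$. (The same estimate at $\infty$ forces $|a_P|\le|a_F|$ for the leading coefficients, so $\deg\widetilde G=n$ when $|\lambda|<1$.) Now, since $|\alpha_j|\ge k$, Laguerre's theorem shows that each operator $D_{\alpha_j}$ preserves the property of having all zeros in the disk $|z|\le k$; applying it $s$ times, $\widetilde G_{j}$ is a polynomial of degree $n-j$ with all zeros in $|z|\le k\le1$ for every $j$, the case $|\alpha_j|=k$ being recovered at the end by continuity.

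The heart of the argument is the lower bound
\begin{equation*}
|z|^s\,|\widetilde G_s(z)|\ \ge\ \frac{n_s\Lambda_s}{(1+k)^s}\,|\widetilde G(z)|\qquad(|z|\ge1),
\end{equation*}
which I would obtain by induction from the one-step estimate: if $h$ has degree $m$ with all zeros in $|z|\le k\le1$ and $|\alpha|\ge k$, then $|z\,D_\alpha h(z)|\ge\frac{m(|\alpha|-k)}{1+k}|h(z)|$ for $|z|\ge1$. Applying this successively to $h=\widetilde G_{j-1}$ (degree $n-j+1$, zeros in $|z|\le k$) with $\alpha=\alpha_j$ and multiplying the resulting inequalities telescopes, after cancelling the common nonvanishing factors $|\widetilde G_1|,\dots,|\widetilde G_{s-1}|$, to the displayed bound. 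I expect this one-step estimate to be the main obstacle: writing $h(z)=a\prod_j(z-z_j)$ one has $z\,D_\alpha h(z)/h(z)=z\sum_j(\alpha-z_j)/(z-z_j)$, and the naive termwise bound $|(\alpha-z_j)/(z-z_j)|\ge(|\alpha|-k)/(1+k)$ cannot simply be summed because the summands have different phases; the correct argument instead controls $\sum_j z/(z-z_j)=zh'(z)/h(z)$ through the Tur\'an--Malik estimate behind \eqref{e5} (equivalently, maps each $z_j/z$, $|z_j|\le k$, into the disk of center $1/(1-k^2)$ and radius $k/(1-k^2)$) and combines this with the position of $\alpha$. One checks the bound is sharp, with equality for $h=(z-k)^m$ at $z=-1$, which is exactly the Tur\'an extremal configuration.

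Finally I would assemble the pieces. For $|z|>1$ we have $\widetilde G(z)\ne0$, and the heart estimate is in fact strict, so $|z^s\widetilde G_s(z)|>c\,|\widetilde G(z)|\ge|\beta|\,c\,|\widetilde G(z)|=|\beta c\,\widetilde G(z)|$ because $|\beta|\le1$; therefore $\Psi(z)+\lambda\Phi(z)=z^s\widetilde G_s(z)+\beta c\,\widetilde G(z)$ cannot vanish for $|z|>1$. This yields $|\Phi(z)|\le|\Psi(z)|$ for $|z|>1$, and hence for $|z|\ge1$ by continuity, which is precisely \eqref{te1}. The limiting cases $|\alpha_j|=k$ (where $c=0$) and $|\beta|=1$ follow by continuity in the parameters.
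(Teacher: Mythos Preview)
Your proposal is correct and follows essentially the same route as the paper: form the auxiliary polynomial $F+\lambda P$ (the paper uses $P-\lambda F$ with $|\lambda|>1$, which is equivalent), locate its zeros in $|z|\le k$, apply the iterated Tur\'an--Malik lower bound for polar derivatives (this is the paper's Lemma~2.4, proved by induction from the one-step estimate of Lemma~2.2 exactly as you sketch), and deduce that $z^s\widetilde G_s+\beta c\,\widetilde G$ cannot vanish for $|z|>1$. The only differences are in execution: the paper handles possible zeros of $F$ on $|z|=k$ by an explicit factorization $F=F_1F_2$ together with an $r>1$ rescaling to force strict inequalities, and it uses the lower bound only on $|z|=1$ followed by Rouch\'e, whereas you invoke the exterior maximum principle and the pointwise bound for all $|z|\ge1$ (which indeed follows from the $|z|=1$ case, since $z^s\widetilde G_s/\widetilde G$ is analytic and nonvanishing on $|z|\ge1$ including $\infty$, so the minimum modulus principle applies).
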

If we choose $ F(z)= z^{n}M/k^{n} $, where $ M = Max_{|z|=k}\left|P(z)\right|$ in Theorem \ref{t1}, we get the following result.
\begin{corollary}\label{c1}
If $P(z)$ be a polynomial of degree $n$,   
 then for $\alpha_j,\beta\in\mathbb{C}$  with $ \left|\alpha_j\right|\geq k,\left|\beta\right|\leq 1 $,  $j=1,2,\cdots,s$ and  $|z|\geq 1$,   
\begin{align}\label{ce1}
\left|z^sP_s(z)+\beta\dfrac{n_s \Lambda_s}{(1+k)^s}P(z)\right|\leq  \dfrac{n_s|z|^n}{k^n}\left|\alpha_1\alpha_2\cdots\alpha_s+\beta\dfrac{ \Lambda_s}{(1+k)^s}\right|\underset{\left|z\right|=k}{Max}\left|P(z)\right|,
\end{align}
where $n_s$ and $\Lambda_s$ are given by \eqref{te1'}.
\end{corollary}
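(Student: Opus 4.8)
The plan is to specialize Theorem \ref{t1} to the comparison polynomial $F(z)=z^{n}M/k^{n}$ with $M=\max_{|z|=k}|P(z)|$, reducing the corollary to verifying the hypotheses and then evaluating the right-hand side of \eqref{te1} explicitly for this $F$. First I would confirm that $F$ is admissible: assuming $P\not\equiv 0$ (the statement being trivial otherwise), $F$ is a genuine polynomial of degree $n$ whose only zero, at the origin, lies in $|z|\le k$; and on $|z|=k$ one has $|F(z)|=k^{n}M/k^{n}=M\ge|P(z)|$, so the required domination $|P(z)|\le|F(z)|$ holds on the circle $|z|=k$. Hence inequality \eqref{te1} is available for this choice of $F$, with the constraints $|z|\ge 1$, $|\alpha_{j}|\ge k$, $|\beta|\le 1$ carried over unchanged.

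The substantive step is to compute the iterated polar derivative $F_{s}(z)$. Since $F$ is a constant multiple of $z^{n}$, I would first determine the $s$-th polar derivative of $z^{n}$. A short induction using the recursion $P_{j}=(n-j+1)P_{j-1}+(\alpha_{j}-z)P_{j-1}'$ shows that the $j$-th polar derivative of $z^{n}$ equals $n(n-1)\cdots(n-j+1)\,\alpha_{1}\cdots\alpha_{j}\,z^{n-j}$: the base case is $D_{\alpha_{1}}z^{n}=n\alpha_{1}z^{n-1}$, and at each stage the two terms proportional to $z^{n-j+1}$ cancel, leaving a single monomial. Scaling by $M/k^{n}$ then gives $F_{s}(z)=(M/k^{n})\,n_{s}\,\alpha_{1}\cdots\alpha_{s}\,z^{n-s}$.

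Finally I would substitute into the right-hand side of \eqref{te1}. Because $z^{s}F_{s}(z)$ and $F(z)$ both reduce to a multiple of $z^{n}$, the expression collapses to $(M/k^{n})\,n_{s}\,z^{n}\bigl(\alpha_{1}\cdots\alpha_{s}+\beta\Lambda_{s}/(1+k)^{s}\bigr)$; taking moduli and replacing $M$ by $\max_{|z|=k}|P(z)|$ reproduces the bound \eqref{ce1} exactly. I expect the only delicate point to be the telescoping cancellation in the inductive evaluation of the polar derivatives of $z^{n}$; once that identity is secured, the rest is direct substitution.
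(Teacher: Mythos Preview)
Your proposal is correct and follows exactly the route indicated in the paper: specialize Theorem~\ref{t1} with $F(z)=z^{n}M/k^{n}$, $M=\max_{|z|=k}|P(z)|$, verify the hypotheses, compute $F_{s}(z)=(M/k^{n})\,n_{s}\,\alpha_{1}\cdots\alpha_{s}\,z^{n-s}$ by induction, and substitute into \eqref{te1}. Your inductive computation of the polar derivatives of $z^{n}$ is accurate and is the only nontrivial step the paper leaves to the reader.
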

If $\alpha_1=\alpha_2=\cdots=\alpha_s=\alpha,$ then dividing both sides of \eqref{ce1} by $|\alpha|^s$ and letting $|\alpha|\rightarrow\infty,$ we obtain the following result.
\begin{corollary}\label{c2}
If $ P(z)$ be a polynomial of degree $n$ 
 then for $ \beta\in\mathbb{C}$  with $\left|\beta\right|\leq 1 $, and  $|z|\geq 1$,  
\begin{align}\label{ce2}
\left|z^sP^{(s)}(z)+\beta\dfrac{n_s}{(1+k)^s}P(z)\right|\leq  \dfrac{n_s|z|^n}{k^n}\left|1+\dfrac{\beta }{(1+k)^s}\right|\underset{\left|z\right|=k}{Max}\left|P(z)\right|,
\end{align}
where $n_s$  is given by \eqref{te1'}.
\end{corollary}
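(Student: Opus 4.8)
The plan is to deduce Corollary \ref{c2} from Corollary \ref{c1} by the limiting passage already indicated before the statement, making explicit the two steps that require care. Since \eqref{ce1} is valid for \emph{every} choice of $\alpha_1,\dots,\alpha_s$ with $|\alpha_j|\ge k$, I am free to restrict attention to the coincident real case $\alpha_1=\cdots=\alpha_s=\alpha$ with $\alpha>k$ real. This is the key simplification, because then $\alpha^s=|\alpha|^s$ and no unwanted phase $(\alpha/|\alpha|)^s$ survives when I divide by $|\alpha|^s$. With this choice $\alpha_1\cdots\alpha_s=\alpha^s$ and $\Lambda_s=(\alpha-k)^s$, so in particular $\Lambda_s/\alpha^s=(1-k/\alpha)^s\to1$ as $\alpha\to\infty$.

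The central analytic step is to show that the $s$-th iterated polar derivative taken with the common point $\alpha$ satisfies $\lim_{\alpha\to\infty}\alpha^{-s}P_s(z)=P^{(s)}(z)$. I would prove this by induction on $j$ using the recurrence $P_j(z)=(n-j+1)P_{j-1}(z)+(\alpha-z)P_{j-1}'(z)$. The base case $j=1$ is exactly the identity $\lim_{\alpha\to\infty}\alpha^{-1}D_\alpha P(z)=P'(z)$ recorded in the introduction. For the inductive step I divide the recurrence by $\alpha^j$ and write
\[
\frac{P_j(z)}{\alpha^j}=\frac{n-j+1}{\alpha}\cdot\frac{P_{j-1}(z)}{\alpha^{j-1}}+\frac{\alpha-z}{\alpha}\cdot\frac{P_{j-1}'(z)}{\alpha^{j-1}}.
\]
Since $\alpha^{-(j-1)}P_{j-1}\to P^{(j-1)}$ coefficientwise and differentiation commutes with that coefficientwise limit, the first term tends to $0$ and the second to $1\cdot P^{(j)}(z)$, giving $\alpha^{-j}P_j\to P^{(j)}$.

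With these two facts in hand I divide both sides of \eqref{ce1} by $\alpha^s$. On the left, $\alpha^{-s}\big(z^sP_s(z)+\beta n_s\Lambda_s(1+k)^{-s}P(z)\big)$ converges, by the two limits above, to $z^sP^{(s)}(z)+\beta n_s(1+k)^{-s}P(z)$; on the right, $\alpha^{-s}\,n_sk^{-n}|z|^n\big|\alpha^s+\beta\Lambda_s(1+k)^{-s}\big|\max_{|z|=k}|P(z)|$ converges to $n_sk^{-n}|z|^n\big|1+\beta(1+k)^{-s}\big|\max_{|z|=k}|P(z)|$. Passing the limit through the continuous modulus on the left then yields \eqref{ce2}.

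The only genuine obstacle is the interchange of the limit with the iterated polar-derivative construction, that is, the inductive identity $\alpha^{-s}P_s\to P^{(s)}$; once the reduction to real $\alpha$ removes the phase ambiguity, everything else is continuity of $|\cdot|$ together with the elementary limit $(1-k/\alpha)^s\to1$.
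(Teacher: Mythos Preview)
Your argument is correct and follows exactly the route the paper indicates just before the statement: set $\alpha_1=\cdots=\alpha_s=\alpha$, divide \eqref{ce1} by $|\alpha|^s$, and let $|\alpha|\to\infty$. Your explicit restriction to real $\alpha>k$ and the inductive verification that $\alpha^{-s}P_s\to P^{(s)}$ merely fill in details the paper leaves implicit; the underlying approach is the same.
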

Again, if we take $\alpha_1=\alpha_2=\cdots=\alpha_s=\alpha,$ then divide both sides of \eqref{te1} by $|\alpha|^s$ and letting $|\alpha|\rightarrow\infty,$ we obtain the following result.
\begin{corollary}\label{c3}
If $ F(z)$ be a polynomial of degree $n$ having all its zeros in the disk $\left|z\right|\leq k$ where $k\leq 1$, and $P(z)$ is a polynomial of degree $n$ such that
\begin{equation*}
\left|P(z)\right|\leq \left|F(z)\right| \,\,\, for\,\,\, |z| = k,
\end{equation*}
 then for $\beta\in\mathbb{C}$  with $ \left|\beta\right|\leq 1 $, and  $|z|\geq 1$,  
\begin{align}\label{ce3}
\left|z^sP^{(s)}(z)+\beta\dfrac{n_s }{(1+k)^s}P(z)\right|\leq \left|z^sF^{(s)}(z)+\beta\dfrac{n_s }{(1+k)^s}F(z)\right|,
\end{align}
where $n_s$ is given by \eqref{te1'}.
\end{corollary}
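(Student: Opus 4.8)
The plan is to obtain Corollary \ref{c3} as a limiting case of Theorem \ref{t1}, following the recipe indicated in the text: specialize all the $\alpha_j$ to a common value $\alpha$, divide through by $|\alpha|^s$, and let $|\alpha|\to\infty$. Since the inequality of Theorem \ref{t1} holds for every admissible choice of the $\alpha_j$, and since $\alpha$ is eliminated entirely from the target inequality \eqref{ce3}, I am free to let $\alpha$ run to infinity along the positive real axis. Taking $\alpha$ real (and $\geq k$) is the cleanest route, because it avoids an extraneous phase factor $e^{is\arg\alpha}$ that would otherwise survive in the limit and spoil the match with \eqref{ce3}.

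First I would set $\alpha_1=\cdots=\alpha_s=\alpha$ in Theorem \ref{t1}, so that $P_s=P_s(z)$ is the $s$-fold polar derivative of $P$ with respect to the single point $\alpha$, likewise for $F_s$, and $\Lambda_s=(\alpha-k)^s$; then \eqref{te1} reads
$$\left|z^sP_s(z)+\beta\frac{n_s(\alpha-k)^s}{(1+k)^s}P(z)\right|\le\left|z^sF_s(z)+\beta\frac{n_s(\alpha-k)^s}{(1+k)^s}F(z)\right|.$$
The central computation is the limit
$$\lim_{\alpha\to\infty}\frac{P_s(z)}{\alpha^s}=P^{(s)}(z),$$
which I would prove by induction on $s$ (with $P_0=P$) using the recursion $P_j(z)=(n-j+1)P_{j-1}(z)+(\alpha-z)P_{j-1}'(z)$. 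Dividing this by $\alpha^j$ gives $P_j/\alpha^j=(n-j+1)\alpha^{-1}\bigl(P_{j-1}/\alpha^{j-1}\bigr)+(1-z/\alpha)\bigl(P_{j-1}/\alpha^{j-1}\bigr)'$; the first summand tends to $0$ since the bracketed factor is bounded, while by the inductive hypothesis, together with the fact that coefficientwise convergence of a family of polynomials of bounded degree forces convergence of their derivatives, the second summand tends to $(P^{(j-1)})'=P^{(j)}$.

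With this in hand I would divide both sides of the displayed inequality by $\alpha^s$, noting that $(\alpha-k)^s/\alpha^s\to1$, so that the left-hand side converges to $\bigl|z^sP^{(s)}(z)+\beta\frac{n_s}{(1+k)^s}P(z)\bigr|$ and the right-hand side to $\bigl|z^sF^{(s)}(z)+\beta\frac{n_s}{(1+k)^s}F(z)\bigr|$. Continuity of the modulus then allows the inequality to pass to the limit, yielding \eqref{ce3}. The only point demanding genuine care is the justification of $P_s/\alpha^s\to P^{(s)}$, in particular the interchange of differentiation with the limit in $\alpha$; but for polynomials this is entirely routine, since only finitely many coefficients are involved and each is a rational function of $\alpha$ converging to the corresponding coefficient of $P^{(s)}$. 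This is the step I would write out in full, the remainder being formal.
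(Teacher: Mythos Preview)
Your proposal is correct and follows exactly the route the paper indicates: set $\alpha_1=\cdots=\alpha_s=\alpha$ in Theorem~\ref{t1}, divide by $|\alpha|^s$, and let $|\alpha|\to\infty$. The paper offers no further detail beyond that one sentence, so your inductive justification of $P_s(z)/\alpha^s\to P^{(s)}(z)$ and your care in choosing $\alpha$ real (to avoid a spurious phase) simply make rigorous what the paper leaves implicit.
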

If we choose $P(z)=mz^n/k,$  where $m=Min_{|z|=k}|F(z)|,$ in Theorem \ref{t1} we get the following result.
\begin{corollary}\label{c4}
If $ F(z)$ be a polynomial of degree $n$ having all its zeros in the disk $\left|z\right|\leq k$ where $k\leq 1$, 
  then for $\alpha_j,\beta\in\mathbb{C}$  with $ \left|\alpha_j\right|\geq k,\left|\beta\right|\leq 1 $, where $j=1,2,\cdots,s$ and  $|z|\geq 1$,    
\begin{align}\label{ce4}
\left|z^sF_s(z)+\beta\dfrac{n_s \Lambda_s}{(1+k)^s}F(z)\right|\geq \dfrac{n_s|z|^n}{k^n}\left|\alpha_1\alpha_2\cdots\alpha_s+\dfrac{\beta \Lambda_s}{(1+k)^s}\right|\underset{|z|=k}{Min}|F(z)|,
\end{align}
where $n_s$ and $\Lambda_s$ are given by \eqref{te1'}.
\end{corollary}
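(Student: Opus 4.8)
The plan is to obtain \eqref{ce4} as a direct specialization of Theorem \ref{t1}, taking the comparison polynomial to be the extremal monomial $P(z)=mz^n/k^n$, where $m=\min_{|z|=k}|F(z)|$ (normalised so that $|P(z)|=m$ on $|z|=k$). First I would dispose of the trivial case: if $m=0$ the right-hand side of \eqref{ce4} vanishes and there is nothing to prove, so I may assume $m>0$, in which case $P$ is a genuine polynomial of degree $n$.

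Next I would verify the hypothesis of Theorem \ref{t1} for this $P$. On the circle $|z|=k$ one has $|P(z)|=m|z|^n/k^n=m=\min_{|z|=k}|F(z)|\leq|F(z)|$, so $|P(z)|\leq|F(z)|$ holds for $|z|=k$ exactly as required. Since $F$ has all its zeros in $|z|\leq k$ with $k\leq 1$, and the admissibility conditions $|\alpha_j|\geq k$, $|\beta|\leq 1$ are precisely those of Corollary \ref{c4}, Theorem \ref{t1} applies verbatim and gives, for every $|z|\geq 1$,
\begin{equation*}
\left|z^sP_s(z)+\beta\frac{n_s \Lambda_s}{(1+k)^s}P(z)\right|\leq \left|z^sF_s(z)+\beta\frac{n_s \Lambda_s}{(1+k)^s}F(z)\right|.
\end{equation*}

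The substantive step is to evaluate the left-hand side in closed form by computing the iterated polar derivatives of the monomial. A short induction on $j$ shows that for $P(z)=(m/k^n)z^n$ one has $P_j(z)=n_j\,(m/k^n)\,\alpha_1\cdots\alpha_j\,z^{n-j}$: at each stage $P_{j-1}$ is a monomial of degree $n-j+1$, so Euler's identity gives $zP_{j-1}'=(n-j+1)P_{j-1}$, the term $(n-j+1)P_{j-1}$ and $-zP_{j-1}'$ in $D_{\alpha_j}P_{j-1}$ cancel, and only $\alpha_j P_{j-1}'$ survives, contributing the factor $\alpha_j$ and lowering the degree by one. In particular $P_s(z)=n_s(m/k^n)\alpha_1\cdots\alpha_s\,z^{n-s}$, so that $z^sP_s(z)$ and $\beta\frac{n_s\Lambda_s}{(1+k)^s}P(z)$ share the common factor $n_s(m/k^n)z^n$.

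Finally I would factor this common term out of the left-hand side to obtain, for $|z|\geq 1$,
\begin{equation*}
\left|z^sP_s(z)+\beta\frac{n_s \Lambda_s}{(1+k)^s}P(z)\right| = \frac{n_s|z|^n}{k^n}\left|\alpha_1\cdots\alpha_s+\frac{\beta\Lambda_s}{(1+k)^s}\right|m,
\end{equation*}
and combine it with the inequality supplied by Theorem \ref{t1}, yielding \eqref{ce4} after recalling $m=\min_{|z|=k}|F(z)|$. I expect no genuine obstacle here, as the argument is a clean substitution; the only point demanding care is the bookkeeping in the monomial computation, namely tracking the degree-lowering cancellation through all $s$ stages so that the product $\alpha_1\cdots\alpha_s$ together with the single surviving power $z^{n-s}$ emerges correctly.
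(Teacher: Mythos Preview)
Your proposal is correct and follows exactly the approach indicated in the paper, namely substituting the monomial $P(z)=m z^{n}/k^{n}$ (the paper writes $mz^{n}/k$, evidently a misprint, since only the exponent $k^{n}$ reproduces the factor $|z|^{n}/k^{n}$ in \eqref{ce4}) into Theorem~\ref{t1}. Your added detail---disposing of the degenerate case $m=0$ and carrying out the inductive computation of $P_s(z)=n_s(m/k^{n})\alpha_1\cdots\alpha_s z^{n-s}$---simply makes explicit what the paper leaves to the reader.
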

\begin{remark}
\textnormal{For $\beta=0$ and $k=1,$ we get the result due to Aziz and Wali Mohammad \cite[Theorem 1]{aw}.}
\end{remark}
Again, if we take $\alpha_1=\alpha_2=\cdots=\alpha_s=\alpha,$ then divide both sides of \eqref{ce4} by $|\alpha|^s$ and letting $|\alpha|\rightarrow\infty,$ we obtain the following result.
\begin{corollary}\label{c5}
If $ F(z)$ be a polynomial of degree $n$ having all its zeros in the disk $\left|z\right|\leq k$ where $k\leq 1$, 
 then for $ \beta\in\mathbb{C}$  with $ \left|\beta\right|\leq 1 $, and  $|z|\geq 1$,  
\begin{align}\label{ce5}
\left|z^sF^{(s)}(z)+\dfrac{\beta n_s }{(1+k)^s}F(z)\right|\geq \dfrac{n_s|z|^n}{k^n}\left|1+\dfrac{\beta }{(1+k)^s}\right|\underset{|z|=k}{Min}|F(z)|,
\end{align}
where $n_s$ is given by \eqref{te1'}.
\end{corollary}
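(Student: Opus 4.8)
The plan is to derive \eqref{ce5} directly from Corollary \ref{c4} by a limiting argument, exactly as indicated in the sentence preceding the statement: specialize all the points $\alpha_j$ to a common value $\alpha$, normalize by the $s$-th power of $|\alpha|$, and let $|\alpha|\to\infty$. Since \eqref{ce5} contains no reference to the $\alpha_j$, it suffices to pass to the limit along any convenient path, and I would take $\alpha$ real and positive with $\alpha\to+\infty$; this avoids the unwanted phase factor $\alpha^{s}/|\alpha|^{s}$ that would otherwise survive inside the moduli on both sides.

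The first step is to record the behaviour of the iterated polar derivative under coincidence of the points. Setting $\alpha_1=\cdots=\alpha_s=\alpha$ in the recursion $P_j(z)=(n-j+1)P_{j-1}(z)+(\alpha_j-z)P_{j-1}'(z)$ with $P_0=F$, I claim that
\begin{equation*}
\lim_{\alpha\to\infty}\frac{F_j(z)}{\alpha^j}=F^{(j)}(z),\qquad j=0,1,\dots,s.
\end{equation*}
This is proved by induction on $j$: the case $j=0$ is trivial, and for the inductive step one writes
\begin{equation*}
\frac{F_j(z)}{\alpha^j}=\frac{n-j+1}{\alpha}\cdot\frac{F_{j-1}(z)}{\alpha^{j-1}}+\Bigl(1-\frac{z}{\alpha}\Bigr)\frac{d}{dz}\frac{F_{j-1}(z)}{\alpha^{j-1}},
\end{equation*}
whence the first term vanishes and the second converges to $F^{(j)}(z)$, using that for polynomials the derivative commutes with the (coefficientwise) limit. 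This is the only place a genuine argument is needed, and it is the step I would regard as the crux; everything else is bookkeeping.

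With this in hand, I would take $\alpha_1=\cdots=\alpha_s=\alpha>0$ in \eqref{ce4}, so that $\alpha_1\cdots\alpha_s=\alpha^s$ and $\Lambda_s=(\alpha-k)^s$, and divide both sides by $\alpha^s$. On the left, $z^sF_s(z)/\alpha^s\to z^sF^{(s)}(z)$ while $n_s\Lambda_s/\bigl(\alpha^s(1+k)^s\bigr)\to n_s/(1+k)^s$, so the left-hand side tends to the modulus appearing on the left of \eqref{ce5}. On the right, $\alpha^s/\alpha^s=1$ and $\Lambda_s/\alpha^s=(1-k/\alpha)^s\to1$, so the factor $\bigl|\alpha_1\cdots\alpha_s+\beta\Lambda_s/(1+k)^s\bigr|/\alpha^s$ tends to $\bigl|1+\beta/(1+k)^s\bigr|$, while the remaining factors $n_s|z|^n/k^n$ and $\min_{|z|=k}|F(z)|$ are unaffected. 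Since \eqref{ce4} holds for every admissible $\alpha>0$ and non-strict inequalities are preserved under limits, passing to the limit yields precisely \eqref{ce5}, completing the proof.
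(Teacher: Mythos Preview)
Your proposal is correct and follows exactly the route the paper indicates: specialize $\alpha_1=\cdots=\alpha_s=\alpha$ in \eqref{ce4}, divide by $|\alpha|^s$, and let $|\alpha|\to\infty$. You supply more detail than the paper (the inductive verification that $F_s(z)/\alpha^s\to F^{(s)}(z)$ and the choice of real positive $\alpha$ to kill the phase), but the approach is identical.
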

For $s = 1,$ and $\alpha_1=\alpha$ in Theorem \ref{t1} we get the following result:
 \begin{corollary}\label{c}
 If $ F(z)$ be a polynomial of degree $n$ having all its zeros in the disk $\left|z\right|\leq k$ where $k\leq 1$, and $P(z)$ is a polynomial  n such that
 \begin{equation*}
 \left|P(z)\right|\leq \left|F(z)\right| \,\,\, for\,\,\, |z| = k,
 \end{equation*}
  then for $\alpha, \beta\in\mathbb{C}$  with $ \left|\alpha\right|\geq k,\left|\beta\right|\leq 1 $, and  $|z|\geq 1$,  
 \begin{equation}\nonumber\label{ce}
 \left|zD_\alpha P(z)+n\beta \left(\dfrac{|\alpha|-k}{k+1}\right)P(z)\right|
 \leq \left|zD_\alpha F(z)+n\beta \left(\dfrac{|\alpha|-k}{k+1}\right)F(z)\right|.
 \end{equation}
 \end{corollary}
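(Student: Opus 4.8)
The plan is to derive this statement directly from Theorem \ref{t1} by specializing to $s=1$, since every hypothesis here is precisely the $s=1$ instance of the hypotheses there. First I would check that the assumptions transfer intact: $F$ is a degree-$n$ polynomial with all its zeros in $|z|\leq k$, $k\leq 1$; $P$ has degree $n$ with $|P(z)|\leq|F(z)|$ on $|z|=k$; and the single point $\alpha=\alpha_1$ satisfies $|\alpha|\geq k$ while $|\beta|\leq 1$. These are exactly the conditions required to invoke Theorem \ref{t1} with $s=1$ and $\alpha_1=\alpha$, under the same range $|z|\geq 1$.

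Next I would evaluate the quantities appearing in \eqref{te1} at $s=1$. From \eqref{te1'}, $n_1=n$ and $\Lambda_1=|\alpha_1|-k=|\alpha|-k$, while $(1+k)^s=1+k$. The first polar derivative is $P_1(z)=D_{\alpha_1}P(z)=D_\alpha P(z)$, and likewise $F_1(z)=D_\alpha F(z)$. Substituting $z^s=z$ together with these values into \eqref{te1}, the left-hand side becomes $\left|zD_\alpha P(z)+\beta\,\dfrac{n(|\alpha|-k)}{1+k}\,P(z)\right|$ and the right-hand side becomes $\left|zD_\alpha F(z)+\beta\,\dfrac{n(|\alpha|-k)}{1+k}\,F(z)\right|$, which is exactly the asserted inequality (using $1+k=k+1$).

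Since the conclusion is an immediate specialization, there is no substantive obstacle: the entire analytic content is carried by Theorem \ref{t1}, and the only verification required is the purely cosmetic matching of the coefficient $\beta\,n_1\Lambda_1/(1+k)^s$ with $n\beta\big((|\alpha|-k)/(k+1)\big)$, which is immediate. For completeness I would note that the degree bound and the constraint $|z|\geq 1$ are inherited without change, so the corollary follows at once.
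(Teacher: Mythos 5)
Your proposal is correct and matches the paper exactly: the paper obtains this corollary as the direct specialization of Theorem \ref{t1} to $s=1$ with $\alpha_1=\alpha$, so that $n_1=n$, $\Lambda_1=|\alpha|-k$, $P_1(z)=D_\alpha P(z)$ and $F_1(z)=D_\alpha F(z)$, just as you describe. Your verification of the coefficient $\beta n_1\Lambda_1/(1+k)$ against $n\beta\left(\dfrac{|\alpha|-k}{k+1}\right)$ is the only step needed, and it is carried out correctly.
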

\begin{theorem}\label{t2}
If $P(z)$ is a polynomial of degree $n,$ which does not vanish in the disk $|z|< k$ where $k\leq 1,$ then for $\alpha_1,\alpha_2,\cdots,\alpha_s,\beta\in\mathbb{C}$ with $|\alpha_1|\geq k,|\alpha_2|\geq k,\cdots,|\alpha_s|\geq k,|\beta|\leq 1$ and $|z|\geq 1$ then
\begin{align}\nonumber\label{te2}
\Bigg|z^sP_s(z)&+\beta\dfrac{n_s \Lambda_s}{(1+k)^s}P(z)\Bigg|\\&\leq \dfrac{n_s}{2}\left\{\dfrac{|z^n| }{k^n}\Bigg|\alpha_1\alpha_2\cdots\alpha_s+\dfrac{\beta \Lambda_s}{(1+k)^s} \Bigg|+\Bigg|z^s+\dfrac{\beta \Lambda_s}{(1+k)^s}\Bigg|\right\}\underset{|z|=k}{Max}|P(z)|,
\end{align}
where $n_s$ and $\Lambda_s$ are given by \eqref{te1'}.
\end{theorem}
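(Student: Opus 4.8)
The plan is to deduce \eqref{te2} from Theorem \ref{t1} by pairing $P$ with its reciprocal companion and then combining the resulting comparison with a sharp \emph{additive} estimate. Throughout put $M=\max_{|z|=k}|P|$, write $C=n_s\Lambda_s/(1+k)^s$, and abbreviate by $\mathcal L[f](z)=z^sf_s(z)+\beta C\,f(z)$ the linear operator whose value at $P$ is the left-hand side of \eqref{te2}; here $f_s$ is the $s$-th polar derivative of $f$ with respect to $\alpha_1,\dots,\alpha_s$. With $T_1=\frac{|z|^n}{k^n}\bigl|\alpha_1\cdots\alpha_s+\frac{\beta\Lambda_s}{(1+k)^s}\bigr|$ and $T_2=\bigl|z^s+\frac{\beta\Lambda_s}{(1+k)^s}\bigr|$, the right-hand side of \eqref{te2} is exactly $\frac{n_s}{2}(T_1+T_2)M$.

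First I would introduce the scale-$k$ companion $H(z)=k^{-n}z^{n}\overline{P(k^{2}/\bar z)}$. Since $P$ has no zero in $|z|<k$, all $n$ zeros of $H$ lie in $|z|\le k$ and $\deg H=n$; moreover on $|z|=k$ one has $k^2/\bar z=z$, whence $H(z)=k^{-n}z^{n}\overline{P(z)}$ and so $|H(z)|=|P(z)|$ there. In particular $\max_{|z|=k}|H|=M$ and $|P(z)|\le|H(z)|$ on $|z|=k$. Theorem \ref{t1} therefore applies with $F=H$ and gives, for $|z|\ge1$, the comparison
\[
|\mathcal L[P](z)|\le|\mathcal L[H](z)|.
\]

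The decisive second ingredient is the additive bound
\[
|\mathcal L[P](z)|+|\mathcal L[H](z)|\le n_s(T_1+T_2)M\qquad(|z|\ge1),
\]
which holds with \emph{no} restriction on the zeros of $P$. Granting it, the theorem is immediate: combining it with the comparison above gives $2|\mathcal L[P]|\le|\mathcal L[P]|+|\mathcal L[H]|\le n_s(T_1+T_2)M$, i.e.\ \eqref{te2}. To prove the additive bound I would fix $z$ with $|z|\ge1$ and choose $\gamma\in\mathbb R$ so that the two moduli add in phase; by linearity $|\mathcal L[P](z)|+|\mathcal L[H](z)|=|\mathcal L[\Sigma](z)|$ with $\Sigma=P+e^{i\gamma}H$. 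Since $f\mapsto k^{-n}z^n\overline{f(k^2/\bar z)}$ is conjugate-linear and involutive and sends $H$ back to $P$, the companion of $\Sigma$ equals $e^{-i\gamma}\Sigma$; that is, $\Sigma$ is self-inversive with respect to $|z|=k$.

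It then remains to prove, for a self-inversive $\Sigma$ of degree $n$, the sharp estimate $|\mathcal L[\Sigma](z)|\le\frac{n_s}{2}(T_1+T_2)\max_{|z|=k}|\Sigma|$ for $|z|\ge1$; together with $\max_{|z|=k}|\Sigma|\le\max_{|z|=k}|P|+\max_{|z|=k}|H|=2M$ this yields the additive bound. This self-inversive estimate is the main obstacle, since it is where the factor $\tfrac12$ genuinely originates: the crude bound supplied by Corollary \ref{c1} only gives $n_sT_1\max|\Sigma|$, and the gain requires exploiting the symmetry of the zeros of $\Sigma$ about $|z|=k$. I expect to obtain it by locating the zeros of the successive polar derivatives $\Sigma_1,\dots,\Sigma_s$ through Laguerre's theorem (using $|\alpha_j|\ge k$) and thereby showing that $|\mathcal L[\Sigma](z)|$ does not exceed $\frac{n_s}{2}(T_1+T_2)\max_{|z|=k}|\Sigma|$, the bound attained by the extremal self-inversive polynomial $\frac{\max_{|z|=k}|\Sigma|}{2k^{n}}(z^{n}+k^{n})$; here the product $\alpha_1\cdots\alpha_s$ and the parameter $\beta$ enter through the leading and constant coefficients, which furnish the $T_1$ and $T_2$ contributions respectively.
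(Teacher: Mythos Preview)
Your overall architecture coincides with the paper's: compare $P$ with its scale-$k$ companion via Theorem~\ref{t1} (your $H$ is exactly the paper's $k^nQ(\cdot/k^2)$), then combine with an additive bound to produce the factor $\tfrac12$. The paper packages that additive bound as Lemma~\ref{l5} and the rest of your argument is identical to the paper's proof of Theorem~\ref{t2}.

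The gap is in your treatment of the additive bound. Your reduction to a self-inversive $\Sigma=P+e^{i\gamma}H$ is correct as far as it goes, but it is only a restatement: the self-inversive estimate
\[
|\mathcal L[\Sigma](z)|\le \tfrac{n_s}{2}(T_1+T_2)\max_{|z|=k}|\Sigma|
\]
is \emph{equivalent} to the additive inequality you are trying to prove (apply the additive bound to $\Sigma$ and use $H_\Sigma=e^{-i\gamma}\Sigma$; conversely, your phase-matching recovers the additive bound from the self-inversive one). So no progress has been made. More seriously, your proposed attack on the self-inversive case via Laguerre's theorem cannot work as stated: a self-inversive polynomial has its zeros in symmetric pairs $(z_0,\,k^2/\bar z_0)$, typically one inside and one outside $|z|=k$, so there is no circular region containing all zeros of $\Sigma$ and excluding the $\alpha_j$ with $|\alpha_j|\ge k$. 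Lemma~\ref{l1} therefore gives no control on the zeros of $\Sigma_1,\dots,\Sigma_s$, and the extremal comparison you describe has nothing to anchor it.

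The missing idea, which the paper supplies in the proof of Lemma~\ref{l5}, is a Rouch\'e perturbation: replace $P$ by $F(z)=P(z)+\lambda M z^n/k^n$ with $|\lambda|>1$, which forces all zeros of $F$ into $|z|<k$; then for $|\delta|>1$ the combination $k^nG(\cdot/k^2)+\delta F$ (with $G$ the companion of $F$) again has all zeros in $|z|<k$, so Lemma~\ref{l4} applies and yields the comparison between $\mathcal L[F]$ and $\mathcal L[k^nG(\cdot/k^2)]$. Unwinding, choosing the argument of $\lambda$ via Corollary~\ref{c1}, and letting $|\lambda|\to1$ produces the additive bound. In short, the factor $\tfrac12$ is obtained not from the symmetry of a self-inversive $\Sigma$ but from first pushing all zeros into the disk so that Lemma~\ref{l4} becomes available.
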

\begin{remark}
\textnormal{If we take $\beta=0$ and $k=1,$ we get inequality \eqref{ae}.}
\end{remark}
We next prove the following refinement of Theorem \ref{t2}.
\begin{theorem}\label{t3}
If $P(z)$ is a polynomial of degree $n$ and does not vanish in the disk $|z|< k$ where $k\leq 1,$ then for $\alpha_1,\alpha_2,\cdots,\alpha_s,\beta\in\mathbb{C}$ with $|\alpha_1|\geq k,|\alpha_2|\geq k,\cdots,|\alpha_s|\geq k,|\beta|\leq 1$ and $|z|\geq 1,$ we have 
\begin{align}\nonumber\label{te3}
\Bigg|z^s&P_s(z)+\beta\dfrac{ n_s \Lambda_s}{(1+k)^s} P(z)\Bigg|\\\nonumber\leq & \dfrac{n_s}{2}\Bigg[\left\{\dfrac{|z|^n }{k^n}\Bigg|\alpha_1\alpha_2\cdots\alpha_s+\beta\dfrac{ \Lambda_s}{(1+k)^s} \Bigg|+\Bigg|z^s+\beta\dfrac{ \Lambda_s}{(1+k)^s}\Bigg|\right\}\underset{|z|=k}{Max}|P(z)|\\ \,\,\,\,&-\left\{\dfrac{|z|^n}{k^{n}}\Bigg|\alpha_1\alpha_2\cdots\alpha_s+\beta\dfrac{ \Lambda_s}{(1+k)^s}\Bigg|-\Bigg|z^s+\beta\dfrac{ \Lambda_s}{(1+k)^s}\Bigg|\right\}\underset{|z|=k}{Min}|P(z)|\Bigg],
\end{align}
where $n_s$ and $\Lambda_s$ are given by \eqref{te1'}.
\end{theorem}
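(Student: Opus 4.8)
The plan is to deduce \eqref{te3} from two pointwise (in $z$) estimates valid for $|z|\ge 1$, in the same spirit as the proof of Theorem \ref{t2}. Throughout, under the standing hypotheses $|\alpha_j|\ge k$ and $|\beta|\le 1$, write
\[
L_P(z)=z^sP_s(z)+\beta\frac{n_s\Lambda_s}{(1+k)^s}P(z),\quad A=\alpha_1\cdots\alpha_s+\frac{\beta\Lambda_s}{(1+k)^s},\quad B=z^s+\frac{\beta\Lambda_s}{(1+k)^s},
\]
and set $M=\max_{|z|=k}|P(z)|$, $m=\min_{|z|=k}|P(z)|$. If $m=0$ the assertion is just Theorem \ref{t2}, so I may assume $m>0$. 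Since each $D_{\alpha_j}$ is linear, the map $P\mapsto L_P$ is linear, and using $D_\alpha z^n=n\alpha z^{n-1}$ inductively one finds $L_{z^n}(z)=n_sAz^n$ and $L_c(z)=n_scB$ for every constant $c$. Let $Q(z)=z^n\overline{P(k^2/\bar z)}$ be the reversed polynomial; because $P$ has no zero in $|z|<k$, $Q$ has all its zeros in $|z|\le k$, and on $|z|=k$ one has $|Q(z)|=k^n|P(z)|$, whence $\max_{|z|=k}|Q|=k^nM$ and $\min_{|z|=k}|Q|=k^nm$.

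First I would record the general estimate that already underlies Theorem \ref{t2}: for $|z|\ge 1$,
\[
|L_P(z)|+\frac{1}{k^n}|L_Q(z)|\le n_s\left\{\frac{|z|^n}{k^n}|A|+|B|\right\}M.\qquad(\star)
\]
Together with $|L_P(z)|\le k^{-n}|L_Q(z)|$—which follows from Theorem \ref{t1} applied to $P$ and $F=Q/k^n$, since $|P|\le|Q|/k^n$ on $|z|=k$ and $Q/k^n$ has all its zeros in $|z|\le k$—the estimate $(\star)$ yields $2|L_P|\le n_s\{\cdots\}M$, i.e. Theorem \ref{t2}. I expect $(\star)$ to be the principal obstacle: it is the analogue of the classical $|P'|+|Q'|\le n\max|P|$, and it does not follow from Corollary \ref{c1} alone, since that corollary controls only the top-degree contribution $A$ and would give $2\,(|z|^n/k^n)|A|M$ instead of the sharper combination $(|z|^n/k^n)|A|+|B|$. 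It requires the reversed-polynomial identity for $L_Q$ together with a rotation argument, exactly as in the derivation of Theorem \ref{t2}, which I would reuse.

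The genuinely new ingredient is a refinement of $|L_P|\le k^{-n}|L_Q|$ that captures $m$. For $|\lambda|<1$ put $H_\lambda(z)=P(z)-\lambda m(z/k)^n$. On $|z|=k$ one has $|\lambda m(z/k)^n|=|\lambda|m<m\le|P(z)|$, so by Rouch\'e's theorem $H_\lambda$, like $P$, has no zero in $|z|<k$; hence its reverse $H_\lambda^*(z)=z^n\overline{H_\lambda(k^2/\bar z)}=Q(z)-\bar\lambda mk^n$ has all its zeros in $|z|\le k$, with $|H_\lambda|=|H_\lambda^*|/k^n$ on $|z|=k$, and Theorem \ref{t1} gives $|L_{H_\lambda}|\le k^{-n}|L_{H_\lambda^*}|$. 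By linearity and the identities above, $L_{H_\lambda}=L_P-\lambda m k^{-n}n_sAz^n$ and $L_{H_\lambda^*}=L_Q-\bar\lambda m k^n n_sB$. Fixing $z$ with $|z|\ge 1$ and choosing $\arg\lambda$ (letting $|\lambda|\to 1$) so that the two terms of $L_{H_\lambda}$ add in modulus, while bounding $L_{H_\lambda^*}$ by the triangle inequality, I obtain $|L_P|+n_sk^{-n}|A||z|^nm\le k^{-n}|L_Q|+n_s|B|m$, that is,
\[
\frac{1}{k^n}|L_Q(z)|-|L_P(z)|\ge n_sm\left\{\frac{|z|^n}{k^n}|A|-|B|\right\}.\qquad(\star\star)
\]
A short continuity argument passes from $|\lambda|<1$ to the limiting phase $|\lambda|=1$ and disposes of the degenerate case in which $H_\lambda$ drops degree.

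Finally I would combine the two estimates: using $(\star\star)$ to replace $k^{-n}|L_Q|$ in $(\star)$ by the smaller quantity $|L_P|+n_sm\{(|z|^n/k^n)|A|-|B|\}$ gives
\[
2|L_P(z)|+n_sm\left\{\frac{|z|^n}{k^n}|A|-|B|\right\}\le n_s\left\{\frac{|z|^n}{k^n}|A|+|B|\right\}M,
\]
which is precisely \eqref{te3} after dividing by $2$. The two steps demanding care are the sum-estimate $(\star)$ (the source of the factor $1/2$) and the phase-alignment together with the continuity passage in $(\star\star)$; the remaining work consists only of the linear identities for $L_P$ recorded at the outset.
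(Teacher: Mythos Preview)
Your proposal is correct and follows essentially the same two--ingredient strategy as the paper: combine the sum estimate $(\star)$ (which is exactly Lemma~\ref{l5}, your $Q(z)=z^n\overline{P(k^2/\bar z)}$ being $k^{2n}Q_{\text{paper}}(z/k^2)$, so $k^{-n}|L_Q|$ coincides with the paper's $k^n|z^sQ_s(z/k^2)+\cdots|$) with a perturbation/phase--alignment argument yielding the difference estimate $(\star\star)$, and add. The one genuine variation is in step $(\star\star)$: the paper perturbs $P$ by a \emph{constant}, $h=P-\lambda m$, so the $\lambda$--term carrying $A z^n$ appears on the $Q$--side; it then phase--aligns on that side, which forces an appeal to Corollary~\ref{c4} (inequality~\eqref{t3e3}) to guarantee the subtraction is legitimate. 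You instead perturb $P$ by the \emph{top--degree} term $\lambda m(z/k)^n$, so the $A z^n$--term sits on the $P$--side and you can always phase--align there, using only the triangle inequality on the $Q$--side. This dual choice is a mild simplification: it reaches the same inequality $(\star\star)$ (equivalently the paper's \eqref{t3e4}) without invoking Corollary~\ref{c4}. Your caveat about the degenerate case where $H_\lambda$ drops degree is the right one to flag, and continuity in $\lambda$ disposes of it.
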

If we take $\alpha_1=\alpha_2=\cdots=\alpha_s=\alpha,$ then divide both sides of \eqref{te1} by $|\alpha|^s$ and letting $|\alpha|\rightarrow\infty,$ we obtain the following result.
 \begin{corollary}\label{c7}
If $P(z)$ is a polynomial of degree $n,$ which does not vanish in the disk $|z|< k$ where $k\leq 1,$ then for $\beta\in\mathbb{C}$ with $|\beta|\leq 1$ and $|z|\geq 1$ then
\begin{align}\nonumber\label{ce7}
\Bigg|z^sP^{(s)}(z)+\beta\dfrac{ n_s }{(1+k)^s} P(z)\Bigg|\leq&  \dfrac{n_s}{2}\Bigg[\left\{\dfrac{|z|^n}{k^n}\Bigg|1+\dfrac{\beta }{(1+k)^s} \Bigg|+\Bigg|\dfrac{\beta}{(1+k)^s}\Bigg|\right\}\underset{|z|=k}{Max}|P(z)|\\ \,\,\,\,\,&-\left\{\dfrac{|z|^n}{k^{n}}\Bigg|1+\dfrac{ \beta}{(1+k)^s}\Bigg|-\Bigg|\dfrac{ \beta}{(1+k)^s}\Bigg|\right\}\underset{|z|=k}{Min}|P(z)|\Bigg],
\end{align}
where $n_s$ is given by \eqref{te1'}.
 \end{corollary}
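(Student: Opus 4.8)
The plan is to obtain Corollary \ref{c7} as a limiting case of Theorem \ref{t3}; note that although the sentence preceding the corollary cites \eqref{te1}, the expressions in \eqref{ce7} match the limit of \eqref{te3}, so Theorem \ref{t3} is the correct source. Since Theorem \ref{t3} holds for arbitrary $\alpha_1,\dots,\alpha_s$ with $|\alpha_j|\ge k$, I would first specialize it to the diagonal choice $\alpha_1=\alpha_2=\cdots=\alpha_s=\alpha$, taking $\alpha$ to be a (large) positive real number, which is admissible once $\alpha\ge k$. Under this choice the quantities in \eqref{te1'} collapse to $\Lambda_s=(\alpha-k)^s$ and $\alpha_1\alpha_2\cdots\alpha_s=\alpha^s$, while $P_s(z)$ becomes the $s$-fold iterated polar derivative $D_\alpha\cdots D_\alpha P(z)$ taken with the single point $\alpha$.

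Next I would divide both sides of \eqref{te3} by $\alpha^s$ and let $\alpha\to\infty$. The heart of the argument is the asymptotic identity $P_s(z)/\alpha^s\to P^{(s)}(z)$, which I would prove by induction on $s$ from the recursion $P_j(z)=(n-j+1)P_{j-1}(z)+(\alpha-z)P_{j-1}'(z)$: dividing by $\alpha^j$ gives $P_j(z)/\alpha^j=(n-j+1)P_{j-1}(z)/\alpha^j+(1-z/\alpha)\,(P_{j-1}(z)/\alpha^{j-1})'$, and since $P_{j-1}(z)/\alpha^{j-1}\to P^{(j-1)}(z)$ uniformly on compact sets, the first term vanishes and the second tends to $P^{(j)}(z)$, closing the induction with the base case $P_1(z)/\alpha=nP(z)/\alpha+(1-z/\alpha)P'(z)\to P'(z)$. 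Alongside this I would record the elementary limits $\Lambda_s/\alpha^s=(1-k/\alpha)^s\to 1$, then $|\alpha^s+\beta\Lambda_s/(1+k)^s|/\alpha^s\to |1+\beta/(1+k)^s|$, and $|z^s+\beta\Lambda_s/(1+k)^s|/\alpha^s\to |\beta/(1+k)^s|$, the last because $z^s/\alpha^s\to 0$ while $\Lambda_s/\alpha^s\to 1$.

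Feeding these limits term by term into \eqref{te3} divided by $\alpha^s$, the left-hand side tends to $|z^sP^{(s)}(z)+\beta n_s P(z)/(1+k)^s|$, while on the right the factors $\frac{|z|^n}{k^n}|\alpha^s+\beta\Lambda_s/(1+k)^s|/\alpha^s$ and $|z^s+\beta\Lambda_s/(1+k)^s|/\alpha^s$ produce $\frac{|z|^n}{k^n}|1+\beta/(1+k)^s|$ and $|\beta/(1+k)^s|$ respectively, so the two braces tend exactly to the brace-expressions in \eqref{ce7}. Passing to the limit in the inequality — legitimate since every quantity converges and the $\max$ and $\min$ of $|P|$ on $|z|=k$ are constants independent of $\alpha$ — yields \eqref{ce7}.

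The only delicate point is keeping the phase of $\alpha$ under control: for complex $\alpha=|\alpha|e^{i\theta}$ the normalization $P_s(z)/|\alpha|^s$ would carry a spurious factor $e^{is\theta}$ on both sides, so I would restrict $\alpha$ to the positive real axis (permissible since the hypothesis requires only $|\alpha|\ge k$) to ensure $\alpha^s/|\alpha|^s=1$ and hence the clean limit matching \eqref{ce7}. Everything else is a routine passage to the limit, justified by uniform convergence of the polynomial coefficients on $|z|=k$.
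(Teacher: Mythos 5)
Your proposal is correct and follows essentially the same route as the paper: the paper derives Corollary \ref{c7} precisely by setting $\alpha_1=\cdots=\alpha_s=\alpha$ in Theorem \ref{t3}, dividing by $|\alpha|^s$, and letting $|\alpha|\to\infty$, and you rightly spot that the paper's citation of \eqref{te1} in the preceding sentence is a typo for \eqref{te3}. Your added details --- the inductive verification that $P_s(z)/\alpha^s\to P^{(s)}(z)$ via the recursion, the limits $\Lambda_s/\alpha^s\to 1$ and $|z^s+\beta\Lambda_s/(1+k)^s|/\alpha^s\to|\beta|/(1+k)^s$, and the restriction of $\alpha$ to the positive real axis to avoid a spurious phase --- are exactly the steps the paper leaves implicit, and they are all sound.
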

 For $s=1$ in Corollary \ref{c7}, we get the following result.
 \begin{corollary}\label{c8}
 If $P(z)$ is a polynomial of degree $n,$ which does not vanish in the disk $|z|< k$ where $k\leq 1,$ then for $\beta\in\mathbb{C}$ with $|\alpha|\geq k,|\beta|\leq 1$ and $|z|\geq 1$ then
 \begin{align}\nonumber\label{ce8}
 \Bigg|zP^{\prime}(z)+\dfrac{ n\beta }{1+k} P(z)\Bigg|\leq & \dfrac{n}{2}\Bigg[\left\{\dfrac{|z|^n}{k^n}\Bigg|1+\dfrac{\beta }{1+k} \Bigg|+\Bigg|\dfrac{\beta}{1+k}\Bigg|\right\}\underset{|z|=k}{Max}|P(z)|\\&-\left\{\dfrac{|z|^n}{k^{n}}\Bigg|1+\dfrac{ \beta}{1+k}\Bigg|-\Bigg|\dfrac{ \beta}{1+k}\Bigg|\right\}\underset{|z|=k}{Min}|P(z)|\Bigg],
 \end{align}
  \end{corollary}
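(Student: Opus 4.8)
The plan is to obtain Corollary \ref{c8} as the direct specialization $s=1$ of Corollary \ref{c7}, so that essentially no new argument is required beyond substituting the relevant parameter values. First I would recall the statement of Corollary \ref{c7} in full: under the hypotheses that $P(z)$ is a degree-$n$ polynomial not vanishing in $|z|<k$ with $k\le 1$, and $\beta\in\mathbb{C}$ with $|\beta|\le 1$, inequality \eqref{ce7} holds for every $s\in\{1,2,\dots,n\}$ and every $z$ with $|z|\ge 1$. In particular it holds for $s=1$, and the entire content of Corollary \ref{c8} is precisely this instance.

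Next I would set $s=1$ throughout \eqref{ce7}. From the definition in \eqref{te1'}, putting $s=1$ gives $n_1=n$, while $(1+k)^s=(1+k)^1=1+k$. The iterated $s$-th ordinary derivative collapses to the ordinary first derivative, $P^{(1)}(z)=P'(z)$, and the factor $z^s$ becomes $z$. Substituting these into the left-hand side of \eqref{ce7} turns $z^sP^{(s)}(z)+\beta\frac{n_s}{(1+k)^s}P(z)$ into $zP'(z)+\frac{n\beta}{1+k}P(z)$, which is exactly the expression on the left of \eqref{ce8}.

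Finally I would carry the same substitution through the right-hand side: the prefactor $n_s/2$ becomes $n/2$, each occurrence of $\beta/(1+k)^s$ becomes $\beta/(1+k)$, and the factor $|z|^n/k^n$ is unchanged. The two braced quantities multiplying $\max_{|z|=k}|P(z)|$ and $\min_{|z|=k}|P(z)|$ then match term-for-term with those in \eqref{ce8}, completing the verification. There is no genuine obstacle here, since all of the analytic work was already carried out in Theorem \ref{t3} and its Corollary \ref{c7}; the only point to check is that the numerology of $n_s$, $(1+k)^s$, and $P^{(s)}$ reduces correctly at $s=1$, which it does. (The stray condition $|\alpha|\ge k$ appearing in the hypotheses of Corollary \ref{c8} is vacuous, as no parameter $\alpha$ enters the inequality after the limiting process $|\alpha|\to\infty$ that produced Corollary \ref{c7}.)
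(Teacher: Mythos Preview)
Your proposal is correct and follows exactly the paper's own approach: Corollary~\ref{c8} is stated in the paper simply as the case $s=1$ of Corollary~\ref{c7}, and your substitution $n_1=n$, $(1+k)^1=1+k$, $P^{(1)}=P'$, $z^1=z$ verifies this correctly. Your remark that the condition $|\alpha|\ge k$ is a vestigial artifact of the limiting process is also accurate.
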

  For $\beta=0,$ Theorem \ref{t3} reduces to the following result.
 \begin{corollary}\label{c9}
 If $P(z)$ is a polynomial of degree $n,$ which does not vanish in the disk $|z|< k$ where $k\leq 1,$ then for $\alpha_1,\alpha_2,\cdots,\alpha_s,\beta\in\mathbb{C}$ with $|\alpha_1|\geq k,|\alpha_2|\geq k,\cdots,|\alpha_s|\geq k,|\beta|\leq 1$ and $|z|\geq 1$ then
 \begin{align}\label{ce9}\nonumber
 \big|P_s(z)\big|\leq \dfrac{n_s}{2}\Bigg[&\left\{\dfrac{|z|^{n-s} }{k^n}|\alpha_1\alpha_2\cdots\alpha_s|+1\right\}\underset{|z|=k}{Max}|P(z)|\\&-\left\{\dfrac{|z|^{n-s}}{k^{n}}|\alpha_1\alpha_2\cdots\alpha_s|-1\right\}\underset{|z|=k}{Min}|P(z)|\Bigg],
 \end{align}
 where $n_s$ is given by \eqref{te1'}.
 \end{corollary}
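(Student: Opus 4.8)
The plan is to derive \eqref{ce9} directly from Theorem~\ref{t3} by specializing to $\beta=0$; no fresh argument is needed beyond tracking how the moduli appearing in \eqref{te3} simplify.

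First I would set $\beta=0$ throughout \eqref{te3}. On the left-hand side the term $\beta\frac{n_s\Lambda_s}{(1+k)^s}P(z)$ drops out, leaving $\left|z^sP_s(z)\right|=|z|^s\left|P_s(z)\right|$. On the right-hand side, the expression $\alpha_1\alpha_2\cdots\alpha_s+\beta\frac{\Lambda_s}{(1+k)^s}$ reduces to $\alpha_1\alpha_2\cdots\alpha_s$, so its modulus becomes $|\alpha_1\alpha_2\cdots\alpha_s|$, while $z^s+\beta\frac{\Lambda_s}{(1+k)^s}$ reduces to $z^s$, whose modulus is $|z|^s$. After these substitutions \eqref{te3} becomes
\begin{equation*}
|z|^s\left|P_s(z)\right|\leq \frac{n_s}{2}\left[\left\{\frac{|z|^n}{k^n}|\alpha_1\alpha_2\cdots\alpha_s|+|z|^s\right\}\underset{|z|=k}{Max}|P(z)|-\left\{\frac{|z|^n}{k^n}|\alpha_1\alpha_2\cdots\alpha_s|-|z|^s\right\}\underset{|z|=k}{Min}|P(z)|\right].
\end{equation*}

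Finally, since $|z|\geq 1$ the factor $|z|^s$ is positive, so I would divide both sides by $|z|^s$. Using $|z|^n/|z|^s=|z|^{n-s}$ on each bracketed term, together with $|z|^s/|z|^s=1$ for the bare $|z|^s$ summands, yields exactly \eqref{ce9}. There is no genuine obstacle here; the only point meriting care is to check that the division distributes correctly through both the $Max$ and $Min$ brackets, converting $|z|^n/k^n$ into $|z|^{n-s}/k^n$ and the stray $|z|^s$ terms into $1$, which produces precisely the stated form.
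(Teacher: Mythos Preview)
Your proposal is correct and matches the paper's own derivation: the corollary is stated as the special case $\beta=0$ of Theorem~\ref{t3}, and your computation carries out exactly that specialization followed by division by $|z|^s$. There is nothing to add.
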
 
 \begin{remark}
 \textnormal{For $k=1,$ inequality \eqref{ce9} reduces to \eqref{awe}.}
 \end{remark}
 If we take $\alpha_1=\alpha_2=\cdots=\alpha_s=\alpha,$ then divide both sides of \eqref{te1} by $|\alpha|^s$ and letting $|\alpha|\rightarrow\infty,$ we obtain the following result.
 \begin{corollary}\label{c10}
  If $P(z)$ is a polynomial of degree $n,$ which does not vanish in the disk $|z|< k$ where $k\leq 1,$ then for $\alpha_1,\alpha_2,\cdots,\alpha_s,\beta\in\mathbb{C}$ with $|\alpha_1|\geq k,|\alpha_2|\geq k,\cdots,|\alpha_s|\geq k,|\beta|\leq 1$ and $|z|\geq 1$ then
  \begin{align}\label{ce10}
  \big|P^{(s)}(z)\big|\leq \dfrac{n(n-1)\cdots(n-t+1)|z|^{n-s}}{2k^{n}}\Bigg[\underset{|z|=k}{Max}|P(z)|-\underset{|z|=k}{Min}|P(z)|\Bigg].
  \end{align}
  \end{corollary}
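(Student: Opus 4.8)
The plan is to obtain Corollary~\ref{c10} as the limiting case $|\alpha|\to\infty$ of Corollary~\ref{c9}, after first collapsing all the points $\alpha_1,\dots,\alpha_s$ to a single point. So I would begin with the inequality \eqref{ce9} and set $\alpha_1=\alpha_2=\cdots=\alpha_s=\alpha$, so that $|\alpha_1\alpha_2\cdots\alpha_s|=|\alpha|^s$ and $P_s(z)$ becomes the $s$-fold polar derivative of $P$ taken repeatedly with respect to the single point $\alpha$. This turns \eqref{ce9} into
\begin{equation*}
|P_s(z)|\leq \frac{n_s}{2}\left[\left(\frac{|z|^{n-s}}{k^n}|\alpha|^s+1\right)\underset{|z|=k}{Max}|P(z)|-\left(\frac{|z|^{n-s}}{k^n}|\alpha|^s-1\right)\underset{|z|=k}{Min}|P(z)|\right].
\end{equation*}

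Next I would divide both sides by $|\alpha|^s$ and let $|\alpha|\to\infty$. On the right the two isolated constants $\pm1$ now appear divided by $|\alpha|^s$ and vanish, so the bracket collapses to $\frac{|z|^{n-s}}{k^n}\big(Max_{|z|=k}|P(z)|-Min_{|z|=k}|P(z)|\big)$; since $Max_{|z|=k}|P(z)|$ and $Min_{|z|=k}|P(z)|$ are constants independent of $\alpha$, this passage is immediate. The only substantive point is on the left, where I need the limit relation
\begin{equation*}
\lim_{|\alpha|\to\infty}\frac{P_s(z)}{\alpha^s}=P^{(s)}(z).
\end{equation*}
I would prove this by induction on $s$, the case $s=1$ being exactly $\lim_{\alpha\to\infty}D_\alpha P(z)/\alpha=P'(z)$, recorded in the Introduction. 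For the step, I use $P_s=(n-s+1)P_{s-1}+(\alpha-z)P_{s-1}'$; dividing by $\alpha^s$ gives $\frac{P_s}{\alpha^s}=\frac{n-s+1}{\alpha}\cdot\frac{P_{s-1}}{\alpha^{s-1}}+\frac{\alpha-z}{\alpha}\cdot\frac{P_{s-1}'}{\alpha^{s-1}}$. The first summand tends to $0$, while $\frac{\alpha-z}{\alpha}\to1$ and, because the coefficients of the polynomial $P_{s-1}(z)/\alpha^{s-1}$ converge to those of $P^{(s-1)}(z)$, its $z$-derivative $P_{s-1}'(z)/\alpha^{s-1}$ converges to $P^{(s)}(z)$; hence $P_s/\alpha^s\to P^{(s)}(z)$.

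Combining the two limits yields exactly
\begin{equation*}
|P^{(s)}(z)|\leq \frac{n_s|z|^{n-s}}{2k^n}\left[\underset{|z|=k}{Max}|P(z)|-\underset{|z|=k}{Min}|P(z)|\right],
\end{equation*}
with $n_s=n(n-1)\cdots(n-s+1)$, which is the assertion of Corollary~\ref{c10} (the exponent $t$ in the printed statement being a misprint for $s$). I do not anticipate any real obstacle: the inequality of Corollary~\ref{c9} holds uniformly for all admissible $\alpha$, so the limit may be taken termwise, and the sole item needing verification is the elementary polar-derivative limit above, which reduces by induction to the single-step identity already granted in the Introduction.
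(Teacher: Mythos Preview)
Your approach is correct and matches the paper's intended derivation. The paper's sentence preceding Corollary~\ref{c10} references \eqref{te1}, but that is a copy--paste slip (the same sentence is reused verbatim before Corollaries~\ref{c3}, \ref{c7}, and \ref{c10}); in context the starting point is clearly \eqref{ce9}, exactly as you take it, and the passage $\alpha_1=\cdots=\alpha_s=\alpha$, divide by $|\alpha|^s$, let $|\alpha|\to\infty$ is precisely the paper's route. Your inductive justification of $P_s(z)/\alpha^s\to P^{(s)}(z)$ is a welcome addition, since the paper merely asserts the limiting procedure without spelling it out.
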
 
  If $s=1$ and $\alpha_1=\alpha$ then inequality \eqref{te3} reduces to the following result.
  \begin{corollary}\label{c11}
If $P(z)$ is a polynomial of degree $n,$ which does not vanish in the disk $|z|< k$ where $k\leq 1,$ then for $\alpha,\beta\in\mathbb{C}$ with $|\alpha|\geq k,|\beta|\leq 1$ and $|z|\geq 1$ then
\begin{align}\nonumber\label{ce11}
\Bigg|z&D_\alpha P(z)+n\beta \left(\dfrac{ |\alpha|-k}{1+k}\right) P(z)\Bigg|\\\nonumber&\leq  \dfrac{n}{2}\Bigg[\left\{\dfrac{1 }{k^n}\Bigg|\alpha+\beta\left(\dfrac{ |\alpha|-k}{1+k}\right) \Bigg|+\Bigg|z^s+\beta\left(\dfrac{ |\alpha|-k}{1+k}\right)\Bigg|\right\}\underset{|z|=k}{Max}|P(z)|\\&-\left\{\dfrac{1}{k^{n}}\Bigg|\alpha+\beta\left(\dfrac{ |\alpha|-k}{1+k}\right)\Bigg|-\Bigg|z^s+\beta\left(\dfrac{ |\alpha|-k}{1+k}\right)\Bigg|\right\}\underset{|z|=k}{Min}|P(z)|\Bigg].
\end{align}
  \end{corollary}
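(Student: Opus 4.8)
The plan is to obtain Corollary \ref{c11} directly from Theorem \ref{t3} by specialization, since the corollary is explicitly the case $s=1$, $\alpha_1=\alpha$ of that theorem. First I would set $s=1$ and take the single point $\alpha_1=\alpha$ throughout the statement of Theorem \ref{t3}. With this choice the auxiliary quantities in \eqref{te1'} collapse: $n_s=n_1=n$ and $\Lambda_s=\Lambda_1=|\alpha|-k$. Moreover the iterated polar derivative reduces to the first polar derivative, $P_1(z)=D_{\alpha}P(z)=nP(z)+(\alpha-z)P^{\prime}(z)$, and the factor $z^s$ becomes $z$.

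Next I would substitute these specializations into \eqref{te3}. On the left-hand side the term $z^sP_s(z)$ becomes $zD_\alpha P(z)$, while the coefficient $\beta\,n_s\Lambda_s/(1+k)^s$ becomes $n\beta\bigl((|\alpha|-k)/(1+k)\bigr)$, producing exactly the quantity $\bigl|zD_\alpha P(z)+n\beta\bigl((|\alpha|-k)/(1+k)\bigr)P(z)\bigr|$ appearing on the left of \eqref{ce11}. On the right-hand side the product $\alpha_1\cdots\alpha_s$ reduces to $\alpha$, the ratio $\Lambda_s/(1+k)^s$ reduces to $(|\alpha|-k)/(1+k)$, and $n_s$ reduces to $n$, so the entire bracketed expression of \eqref{te3} carries over term by term to the right of \eqref{ce11}.

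Finally I would check that the hypotheses match: Theorem \ref{t3} requires $P$ of degree $n$ not vanishing in $|z|<k$ with $k\leq 1$, together with $|\alpha_j|\geq k$, $|\beta|\leq 1$, and $|z|\geq 1$; with $s=1$ these are precisely the hypotheses of Corollary \ref{c11}, namely $|\alpha|\geq k$, $|\beta|\leq 1$, and $|z|\geq 1$. Hence no further analytic argument is needed, and the inequality follows as an immediate specialization.

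I expect no genuine obstacle here, as all the analytic content is already contained in Theorem \ref{t3}. The only care required is routine bookkeeping to confirm that each quantity of \eqref{te3} specializes to the corresponding quantity of \eqref{ce11}; in particular one should verify that the degree-$n$ normalizing factor reads $|z|^n/k^n$ and that the remaining power of $z$ is $z$ (i.e. $z^s$ with $s=1$), so that the two terms inside the braces are $(|z|^n/k^n)\,\bigl|\alpha+\beta((|\alpha|-k)/(1+k))\bigr|$ and $\bigl|z+\beta((|\alpha|-k)/(1+k))\bigr|$, as dictated by the general formula.
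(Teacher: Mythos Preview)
Your proposal is correct and matches the paper's own derivation exactly: the paper introduces Corollary~\ref{c11} with the sentence ``If $s=1$ and $\alpha_1=\alpha$ then inequality \eqref{te3} reduces to the following result,'' so the entire proof is precisely the substitution you describe. Your final paragraph is also right that the faithful specialization of \eqref{te3} yields $|z|^n/k^n$ and $z$ (not $1/k^n$ and $z^s$) inside the braces; these are evidently typographical slips in the stated corollary rather than a discrepancy in your argument.
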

\section{\bf Lemmas}
For the proof of Theorems, we need the following Lemmas.
The first Lemma follows by repeated application of Laguerre's theorem \cite{al} or\cite[p. 52]{marden}.
\begin{lemma}\label{l1}
If all the zeros of $nth$ degree polynomial lie in circular region  $C$ and if none of the points $\alpha_1,\alpha_2,\cdots,\alpha_s$ lie in circular region $C$, then each of the polar derivatives
\begin{equation}\label{le1}
D_{\alpha_s}D_{\alpha_{s-1}}\cdots D_{\alpha_1} P(z)=P_s(z),\,\,\,\,\,\,\,\,\,s=1,2,\cdots,n-1
\end{equation}
has all its zeros in $C.$
\end{lemma}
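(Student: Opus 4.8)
The plan is to argue by induction on $s$, using Laguerre's theorem as the single tool at each stage. Recall Laguerre's theorem in its polar-derivative form: if $Q$ is a polynomial of degree $m$ all of whose zeros lie in a circular region $C$, and if $\gamma$ is any point not lying in $C$, then every zero of the polar derivative $D_\gamma Q(z)=m\,Q(z)+(\gamma-z)Q'(z)$ lies in $C$ as well. This is exactly the content of the statement to be established for a single differentiation, and the sequence $P_1,P_2,\ldots,P_s$ is built by iterating precisely this operation with the successive centers $\alpha_1,\alpha_2,\ldots,\alpha_s$.

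First I would record the base case. Since $P=P_0$ has degree $n$ with all its zeros in $C$ and $\alpha_1\notin C$, Laguerre's theorem applied to $P_0$ shows that $P_1=D_{\alpha_1}P_0=nP_0+(\alpha_1-z)P_0'$ has all its zeros in $C$. For the inductive step, suppose $P_{j-1}$ has all its zeros in $C$. By construction $P_{j-1}$ has (formal) degree $n-j+1$, and the recurrence $P_j=(n-j+1)P_{j-1}+(\alpha_j-z)P_{j-1}'$ is exactly the polar derivative $D_{\alpha_j}P_{j-1}$ formed with respect to that degree. Because $\alpha_j\notin C$, a further application of Laguerre's theorem gives that $P_j$ has all its zeros in $C$. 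Iterating from $j=1$ up to $j=s$ then yields the claim for each $s=1,2,\ldots,n-1$.

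The one point demanding care, and the only genuine obstacle, is the bookkeeping of degrees: the coefficient $n-j+1$ appearing in the recurrence is the \emph{formal} degree of $P_{j-1}$ and need not coincide with its actual degree should a leading coefficient happen to cancel, whereas Laguerre's theorem requires this coefficient to equal the true degree of the polynomial being differentiated. The clean way to dispose of this is to read the whole argument on the Riemann sphere, as in Marden: a drop in the actual degree of $P_{j-1}$ corresponds to a zero at $\infty$, and the spherical form of Laguerre's theorem then forces that zero to lie in $C$ as well. Hence whenever $C$ is a bounded disk (indeed $|z|\le k$ with $k\le 1$, the only case needed in the sequel), no zero at $\infty$ is admissible, the actual and formal degrees agree at every stage, and the induction proceeds without exception; for a general circular region one simply carries the spherical reading of ``all zeros in $C$'' throughout.
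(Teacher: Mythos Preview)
Your argument is correct and is exactly the route the paper takes: the paper simply asserts that the lemma ``follows by repeated application of Laguerre's theorem'' and gives no further details, so your induction on $s$ with Laguerre's theorem at each step is precisely what was intended. Your discussion of the formal-versus-actual degree is a welcome addition; the paper postpones that bookkeeping to the proof of Lemma~\ref{l4}, where it uses Lemma~\ref{l3} to rule out leading-coefficient cancellation in the disk case $|z|\le k$.
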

The next Lemma is due to Aziz and Rather \cite{ar98}.
\begin{lemma}\label{l2}
If $P(z)$ is a polynomial of degree $n,$ having all zeros in the closed disk $|z|\leq k,$ $ k\leq 1,$ then for every real or complex number $\alpha$ with $|\alpha|\geq k$ and $|z|=1$, we have
\begin{equation}\label{le2}
|D_\alpha P(z)|\geq n\left(\dfrac{|\alpha|-k}{1+k}\right)|P(z)|.
\end{equation}
\end{lemma}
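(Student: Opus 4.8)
The plan is to reduce everything to the location of the logarithmic-derivative locus $A(z):=zP'(z)/P(z)$ on the unit circle, and to split the polar derivative as $D_\alpha P(z)=\alpha P'(z)+\bigl(nP(z)-zP'(z)\bigr)$ \emph{before} estimating. Writing $P(z)=a_n\prod_{j=1}^n(z-z_j)$ with $|z_j|\le k\le 1$, for $|z|=1$ (where $P(z)\neq 0$, except possibly when $k=1$, a case handled at the end) we have $A(z)=\sum_{j=1}^n\frac{z}{z-z_j}=\sum_{j=1}^n\frac{1}{1-z_j\bar z}$, since $\bar z=1/z$. Thus each summand is the image of a point $u_j=z_j\bar z$ with $|u_j|\le k$ under the M\"obius map $u\mapsto(1-u)^{-1}$.

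First I would locate $A(z)$. The map $u\mapsto(1-u)^{-1}$ carries the closed disk $|u|\le k<1$ onto a closed disk whose diameter is the image of $[-k,k]$, namely the real segment with endpoints $\frac{1}{1+k}$ and $\frac{1}{1-k}$; hence each summand lies in the closed disk with center $\frac{1}{1-k^2}$ and radius $\frac{k}{1-k^2}$. Adding $n$ such points, $A(z)$ lies in the closed disk $D$ with center $c=\frac{n}{1-k^2}$ and radius $r=\frac{nk}{1-k^2}$. (When $k=1$ this degenerates to the half-plane $\mathrm{Re}\,A\ge n/2$, the classical Tur\'an situation, and the estimates below persist in the limit.)

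From $A(z)\in D$ I extract two facts. Since $c>r$, the point of $D$ nearest the origin is at distance $c-r=\frac{n}{1+k}$, so $|A(z)|\ge\frac{n}{1+k}$, i.e. $(\mathrm{i})\ |P'(z)|\ge\frac{n}{1+k}|P(z)|$. Completing the square shows that membership $|A-c|\le r$ is algebraically identical to $|n-A|\le k|A|$: indeed $|A-c|^2\le r^2\iff(1-k^2)|A|^2-2n\,\mathrm{Re}\,A+n^2\le 0\iff|n-A|^2\le k^2|A|^2$. Hence $(\mathrm{ii})\ |nP(z)-zP'(z)|=|P(z)|\,|n-A(z)|\le k\,|P(z)|\,|A(z)|=k|P'(z)|$. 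This equivalence is the heart of the argument, and is where I expect the real work to lie; everything else is bookkeeping.

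Finally I combine. On $|z|=1$ we have $D_\alpha P(z)=\alpha P'(z)+\bigl(nP(z)-zP'(z)\bigr)$, so the triangle inequality together with $(\mathrm{ii})$ and then $(\mathrm{i})$ gives
\[
|D_\alpha P(z)|\ge |\alpha|\,|P'(z)|-|nP(z)-zP'(z)|\ge(|\alpha|-k)\,|P'(z)|\ge\frac{n(|\alpha|-k)}{1+k}\,|P(z)|,
\]
where the middle step is nonnegative because $|\alpha|\ge k$. This is precisely \eqref{le2}. The only loose end is a zero of $P$ on $|z|=1$, which can occur only when $k=1$; there both sides vanish and the inequality is trivial, while the general case follows from the continuity of both sides in the zeros $z_j$ (equivalently, by letting the zeros approach the circle).
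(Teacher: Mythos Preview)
Your argument is correct. The key computation---that for $|z|=1$ and all $|z_j|\le k$ the logarithmic sum $A(z)=\sum_j (1-z_j\bar z)^{-1}$ lies in the disk of center $n/(1-k^2)$ and radius $nk/(1-k^2)$, and that membership in this disk is \emph{equivalent} to the Apollonius condition $|n-A|\le k|A|$---is clean and yields both $|P'(z)|\ge\frac{n}{1+k}|P(z)|$ and $|nP(z)-zP'(z)|\le k|P'(z)|$ at once. The decomposition $D_\alpha P=\alpha P'+(nP-zP')$ then gives the result by a single triangle inequality, and your treatment of the boundary case $k=1$ (half-plane degeneration, or continuity in the roots) is adequate.

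As for comparison with the paper: the paper does not prove this lemma at all. It is quoted verbatim from Aziz and Rather \cite{ar98} and used as a black box in the induction of Lemma~\ref{l4}. So your write-up supplies a self-contained proof where the paper offers only a citation. The approach you give---via the image disk of $u\mapsto(1-u)^{-1}$ and the equivalence $|A-c|\le r\iff|n-A|\le k|A|$---is in fact the standard route to this inequality (and essentially the one in \cite{ar98}); the observation $|nP(z)-zP'(z)|\le k|P'(z)|$ on $|z|=1$ is exactly the statement that the conjugate polynomial $Q(z)=z^n\overline{P(1/\bar z)}$ satisfies $|Q'(z)|\le k|P'(z)|$ there, which is how Malik-type bounds are usually derived. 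Nothing is missing.
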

\begin{lemma}\label{l3}
If $P(z)=\sum_{j=0}^{n}a_jz^j$ is a polynomial of degree $n$ having all its zeros in $|z|\leq k,$ $k\leq 1$ then
\begin{equation}\label{le3}
\dfrac{1}{n}\left|\dfrac{a_{n-1}}{a_n}\right|\leq k.
\end{equation}
\end{lemma}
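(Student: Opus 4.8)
My plan is to use the elementary relationship between the coefficients of $P$ and its zeros, supplied by Vieta's formulas, together with the triangle inequality. Since $P(z)=\sum_{j=0}^{n}a_jz^j$ has degree $n$, we have $a_n\neq 0$, and we may write $P(z)=a_n\prod_{j=1}^{n}(z-z_j)$, where $z_1,z_2,\ldots,z_n$ are the (not necessarily distinct) zeros of $P$. By hypothesis each zero lies in the disk $|z|\leq k$, so $|z_j|\leq k$ for every $j=1,2,\ldots,n$.

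Next I would expand the product and read off the coefficient of $z^{n-1}$. Comparing $a_n\prod_{j=1}^{n}(z-z_j)$ with $\sum_{j=0}^{n}a_jz^j$ gives the standard Vieta identity
\begin{equation*}
\frac{a_{n-1}}{a_n}=-\sum_{j=1}^{n}z_j.
\end{equation*}
Taking absolute values and applying the triangle inequality yields
\begin{equation*}
\left|\frac{a_{n-1}}{a_n}\right|=\left|\sum_{j=1}^{n}z_j\right|\leq\sum_{j=1}^{n}|z_j|\leq nk,
\end{equation*}
and dividing through by $n$ produces exactly the claimed bound $\tfrac{1}{n}\left|a_{n-1}/a_n\right|\leq k$.

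In truth there is no serious obstacle here: the entire argument rests only on the factorization of $P$ over its roots and the crudest form of the triangle inequality, and it does not even use the hypothesis $k\leq 1$ (that restriction is simply inherited from the setting in which the lemma is applied). The only point requiring mild care is the bookkeeping of the sign and index in Vieta's formula, namely that the coefficient of $z^{n-1}$ in the monic product $\prod_{j=1}^{n}(z-z_j)$ equals $-\sum_{j}z_j$; once this is recorded correctly the estimate is immediate.
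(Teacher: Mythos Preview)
Your argument is correct. Writing $P(z)=a_n\prod_{j=1}^{n}(z-z_j)$ and reading off $a_{n-1}/a_n=-\sum_{j=1}^{n}z_j$ via Vieta, followed by the triangle inequality and the hypothesis $|z_j|\le k$, immediately gives $\tfrac{1}{n}|a_{n-1}/a_n|\le k$. You are also right that the assumption $k\le 1$ plays no role in this particular estimate.

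The paper, by contrast, does not give an independent proof at all: it simply observes that the lemma is the special case $\mu=1$ of a more general inequality due to Aziz and Rather. So your approach is genuinely different in that it is direct and self-contained, relying only on the root factorization and the crude triangle inequality, whereas the paper defers the work to an external reference. What the citation buys is brevity and a connection to a broader family of coefficient bounds; what your argument buys is a proof that can be read without consulting another paper and that makes transparent why neither $k\le 1$ nor anything beyond Vieta is needed.
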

The above lemma follows by taking $\mu=1$ in the result due to Aziz and Rather \cite{ar04}.
\begin{lemma}\label{l4}
If $P(z)$ be a polynomial of degree $n$ having all zeros in the disk $|z|\leq k$ where $k\leq 1,$ then for $\alpha_1,\alpha_2,\cdots,\alpha_s\in\mathbb{C}$ with $|\alpha_1|\geq k,|\alpha_2|\geq k,\cdots,|\alpha_s|\geq k,$ $(1\leq s<n),$ and $|z|=1$
\begin{align}\label{le4}
|P_s(z)|\geq\dfrac{n_s \Lambda_s}{(1+k)^s}|P(z)|,
\end{align}
where $n_s$ and $\Lambda_s$ are defined in \eqref{te1'}.
\end{lemma}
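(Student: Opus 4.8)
The plan is to argue by induction on $s$, using Lemma \ref{l2} as the engine and Lemma \ref{l1} to guarantee that the hypotheses of Lemma \ref{l2} persist along the chain of polar derivatives. Before starting the induction, I would dispose of a degenerate case: if $|\alpha_j|=k$ for some $j$, then the factor $(|\alpha_j|-k)$ in $\Lambda_s$ vanishes, so the right-hand side of \eqref{le4} is $0$ and the inequality holds trivially. Hence I may assume $|\alpha_j|>k$ for every $j$, i.e. none of the points $\alpha_1,\dots,\alpha_s$ lies in the closed disk $C=\{z:|z|\le k\}$.

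The key structural observation is that each intermediate polynomial keeps its zeros inside $C$. Since $P=P_0$ has all its zeros in the circular region $C$ and none of $\alpha_1,\dots,\alpha_{s-1}$ lies in $C$, Lemma \ref{l1} guarantees that $P_j=D_{\alpha_j}\cdots D_{\alpha_1}P$ has all its zeros in $C$ for every $j=0,1,\dots,s-1$. Moreover, because $P_{j-1}$ has degree $n-j+1$, the recursion $P_j(z)=(n-j+1)P_{j-1}(z)+(\alpha_j-z)P_{j-1}'(z)$ shows that $P_j$ is exactly the polar derivative of $P_{j-1}$ with respect to $\alpha_j$, formed relative to the degree $n-j+1$ of $P_{j-1}$; this is the precise form in which Lemma \ref{l2} will be applied.

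With this in hand the induction is routine. For the base case $s=1$, applying Lemma \ref{l2} to $P=P_0$ with $\alpha=\alpha_1$ gives, on $|z|=1$, the bound $|P_1(z)|\ge n\big((|\alpha_1|-k)/(1+k)\big)|P(z)|$, which is \eqref{le4} for $s=1$ since $n_1=n$ and $\Lambda_1=|\alpha_1|-k$. For the inductive step, suppose \eqref{le4} holds for $s-1$. Since $P_{s-1}$ has all its zeros in $|z|\le k$, I apply Lemma \ref{l2} to $P_{s-1}$ (of degree $n-s+1$) with $\alpha=\alpha_s$, obtaining on $|z|=1$
\[
|P_s(z)|=|D_{\alpha_s}P_{s-1}(z)|\ge (n-s+1)\left(\frac{|\alpha_s|-k}{1+k}\right)|P_{s-1}(z)|.
\]
Combining this with the induction hypothesis $|P_{s-1}(z)|\ge \big(n_{s-1}\Lambda_{s-1}/(1+k)^{s-1}\big)|P(z)|$ and using the identities $(n-s+1)n_{s-1}=n_s$, $(|\alpha_s|-k)\Lambda_{s-1}=\Lambda_s$, and $(1+k)(1+k)^{s-1}=(1+k)^s$ telescopes directly to the claimed inequality \eqref{le4}.

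The main point requiring care is the hypothesis-persistence step: Lemma \ref{l2} can be invoked on $P_{s-1}$ only once we know all of its zeros lie in $|z|\le k$, and this is exactly what Laguerre's theorem (Lemma \ref{l1}) supplies, the sole subtlety being the boundary configuration $|\alpha_j|=k$, which is removed in advance by the preliminary trivial reduction. Everything after that is the bookkeeping of the constants $n_s$ and $\Lambda_s$.
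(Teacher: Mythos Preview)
Your approach is the same as the paper's: dispose of the case $\Lambda_s=0$, then induct on $s$ using Lemma~\ref{l2} at each step and Lemma~\ref{l1} to keep the zeros inside $|z|\le k$. There is, however, one point you assert without justification that the paper treats explicitly. You write ``because $P_{j-1}$ has degree $n-j+1$'' and then apply Lemma~\ref{l2} to $P_{j-1}$ as a polynomial of that exact degree. But a polar derivative can lose its top coefficient: if $P_{j-1}(z)=b_{n-j+1}z^{n-j+1}+b_{n-j}z^{n-j}+\cdots$, the leading coefficient of $D_{\alpha_j}P_{j-1}$ is $(n-j+1)b_{n-j+1}\alpha_j+b_{n-j}$, which might vanish. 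If it does, then $P_j$ as defined by the recursion no longer coincides with the polar derivative of $P_{j-1}$ taken relative to its \emph{actual} degree, and Lemma~\ref{l2} with the factor $n-j+1$ does not apply. Lemma~\ref{l1}, as stated here (affine version), only controls the location of the zeros that remain; it does not by itself rule out a zero escaping to infinity.

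The paper closes this gap via Lemma~\ref{l3}: since all zeros of $P_{j-1}$ lie in $|z|\le k$, one has $\big|b_{n-j}/\big((n-j+1)b_{n-j+1}\big)\big|\le k<|\alpha_j|$, so the leading coefficient cannot vanish and the degree is exactly $n-j+1$ at every stage. Once you insert this degree check (or appeal to the projective form of Laguerre's theorem, which treats a degree drop as a zero at $\infty\notin C$), your argument is complete and coincides with the paper's proof.
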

\begin{proof}
The result is trivial if $|\alpha_j|=k$ for at least one $j$ where $j=1,2,\cdots,t.$ Therefore, we assume $|\alpha_j|>k$ for all $j=1,2,\cdots,t.$ We shall prove Lemma by principle of mathematical induction. For $t=1$ the result is follows by Lemma \ref{l2}. \\
We assume that the result is true for $t=q,$ which means that for $|z|=1,$ we have
\begin{equation}\label{l4pe1}
|P_q(z)|\geq \dfrac{n_qA_{\alpha_q}}{(1+k)^q}|P(z)|\,\,\,q\geq 1,
\end{equation}
and we will prove that the Lemma is true for $t=q+1$ also.\\
Since $D_{\alpha_1}P(z)=(na_n\alpha_1+a_{n-1})z^{n-1}+\cdots+(na_0+\alpha_1a_1)$ and $|\alpha_1|>k,$  $D_{\alpha_1}P(z)$ is a polynomial of degree $n-1.$ If this is not true, then  $$ na_n\alpha_1+a_{n-1}=0, $$ which implies 
$$ |\alpha_1|=\dfrac{1}{n}\left|\dfrac{a_{n-1}}{a_n}\right|.   $$
By Lemma \ref{l3}, we have
$$ |\alpha_1|=\dfrac{1}{n}\left|\dfrac{a_{n-1}}{a_n}\right|\leq k .  $$
But this is the contradiction to the fact $|\alpha|>k.$ Hence,  $D_{\alpha_1}P(z)$ is a polynomial of degree $n-1$ and by Lemma \ref{l1}, $D_{\alpha_1}P(z)$ has all its zeros in $|z|\leq k.$
By the similar argument as before, $D_{\alpha_2}D_{\alpha_1}P(z)$ must be a polynomial of degree $n-2$ for $|\alpha_1|>k,$ $|\alpha_2|>k$ and all its zeros in $|z|\leq k.$ Continuing in this way, we conclude $D_{\alpha_q}D_{\alpha_{q-1}}\cdots D_{\alpha_1}P(z)=P_q(z)$ is a polynomial of degree $n-q$ for all $|\alpha_j|>k,$ $j=1,2,\dots,q$ and has all zeros in $|z|\leq k.$  Applying Lemma \ref{l2} to $P_q(z),$ we get for $|\alpha_{q+1}|>k,$ 
\begin{equation}\label{l4pe2}
|P_{q+1}(z)|=|D_{\alpha_{q+1}}P_q(z)|\geq \dfrac{(n-q)(|\alpha_{q+1}|-k)}{1+k}|P_q(z)|\,\,\,\textnormal{for}\,\,\,\,|z|=1.
\end{equation}
Inequality \eqref{l4pe2} in conjunction with \eqref{l4pe1} gives for $|z|=1,$
\begin{equation}
|P_{q+1}(z)|\geq \dfrac{n_{q+1}A_{\alpha_{q+1}}}{(1+k)^{q+1}}|P(z)|,
\end{equation}
where $n_{q+1}=n(n-1)\cdots(n-q)$ and $A_{\alpha_{q+1}}=(|\alpha_1|-k)(|\alpha_2|-k)\cdots(|\alpha_{q+1}|-k).$\\
This shows that the result is true for $s=q+1$ also. This completes the proof of Lemma \ref{l4}.
\end{proof}
\begin{lemma}\label{l5}
If $P(z)$ be a polynomial of degree $n,$  then for $\alpha_1,\alpha_2,\cdots,\alpha_s\in\mathbb{C}$ with $|\alpha_1|\geq k,|\alpha_2|\geq k,\cdots,|\alpha_s|\geq k,$ $(1\leq t<n),$ $|\beta|\leq 1$ and $|z|\geq 1,$ 
\begin{align}\nonumber\label{le5}
\Bigg|z^s&P_s(z)+\beta\dfrac{n_s \Lambda_s}{(1+k)^s}P(z)\Bigg|+k^n\left|z^sQ_s(z/k^2)+\beta\dfrac{n_s \Lambda_s}{(1+k)^s}Q(z/k^2)\right|\\\leq &n_s\left\{\dfrac{|z^n| }{k^n}\Bigg|\alpha_1\alpha_2\cdots\alpha_s+\dfrac{\beta \Lambda_s}{(1+k)^s} \Bigg|+\Bigg|z^s+\dfrac{\beta \Lambda_s}{(1+k)^s}\Bigg|\right\}\underset{|z|=k}{Max}|P(z)|,
\end{align}
where $n_s$ and $\Lambda_s$ are defined in \eqref{te1'} and $Q(z)=z^n\overline{P(1/\overline{z})}.$
\end{lemma}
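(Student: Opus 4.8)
The plan is to read the right-hand side of \eqref{le5} as the value of the \emph{same} two-part expression evaluated at the extremal polynomial $G(z)=Mz^{n}/k^{n}$, where $M=\max_{|z|=k}|P(z)|$, and then to show that among all competitors with $\max_{|z|=k}|P|=M$ this expression is largest for $G$. Since $D_{\alpha}(z^{n})=n\alpha z^{n-1}$, iterating gives $G_{s}(z)=\frac{M}{k^{n}}n_{s}\alpha_{1}\cdots\alpha_{s}\,z^{\,n-s}$, so that
$$z^{s}G_{s}(z)+\beta\frac{n_{s}\Lambda_{s}}{(1+k)^{s}}G(z)=\frac{M}{k^{n}}\,n_{s}z^{n}\left(\alpha_{1}\cdots\alpha_{s}+\frac{\beta\Lambda_{s}}{(1+k)^{s}}\right),$$
whose modulus (for $|z|\ge1$) is exactly the first term on the right of \eqref{le5}; the parallel computation with the reflection $G^{*}(z)=z^{n}\overline{G(1/\overline z)}=\overline M/k^{n}$, a constant whose $s$-th polar derivative is $n_{s}\overline M/k^{n}$, reproduces the second term. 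Writing $\Phi_{P}(z)=z^{s}P_{s}(z)+\beta\frac{n_{s}\Lambda_{s}}{(1+k)^{s}}P(z)$ and $\Psi_{P}(z)=z^{s}Q_{s}(z/k^{2})+\beta\frac{n_{s}\Lambda_{s}}{(1+k)^{s}}Q(z/k^{2})$, the assertion \eqref{le5} becomes the extremal comparison $|\Phi_{P}(z)|+k^{n}|\Psi_{P}(z)|\le|\Phi_{G}(z)|+k^{n}|\Psi_{G}(z)|$.

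The engine is a Rouch\'e reduction to the disk case so that Lemma \ref{l4} applies. Put $\gamma=n_{s}\Lambda_{s}/(1+k)^{s}$. For any $\lambda$ with $|\lambda|>1$ the polynomial $F(z)=P(z)-\lambda G(z)$ satisfies $|\lambda G(z)|=|\lambda|M>M\ge|P(z)|$ on $|z|=k$, so by Rouch\'e's theorem $F$ has all $n$ of its zeros in $|z|<k\le1$. Lemma \ref{l4} then gives $|F_{s}(z)|\ge\gamma|F(z)|$ for $|z|=1$, and since $|\beta|\le1$ this yields $|F_{s}(z)|\ge|\beta\gamma F(z)|$ there. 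Because all zeros of $F$ lie in $|z|\le k\le1$, Lemma \ref{l1} shows $F_{s}$ also has all its zeros in $|z|\le k$, so $F_{s}$ dominates its own reflection and $F$ is dominated by its reflection on $|z|\ge1$; combining these domination relations with the previous estimate produces a pointwise inequality relating $\Phi_{F}$ and $\Psi_{F}$ on $|z|=1$, which then persists for all $|z|\ge1$ by a standard maximum-modulus/Rouch\'e argument after normalizing by $z^{n}$.

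Finally I would recover the statement for $P$ itself by expanding $\Phi_{F}=\Phi_{P}-\lambda\Phi_{G}$ together with the corresponding expansion of $\Psi_{F}$, letting $|\lambda|\to1^{+}$, and choosing $\arg\lambda$ to align phases; the elementary identity $|a|+|b|=\max_{\theta}|a+e^{i\theta}b|$ is what converts the one-sided estimate for $F$ into the two-term sum $|\Phi_{P}(z)|+k^{n}|\Psi_{P}(z)|$ bounded by its value at $G$, which is \eqref{le5}. The main obstacle is precisely this last combination. The reflection $Q(z)=z^{n}\overline{P(1/\overline z)}$ depends \emph{conjugate}-linearly on $P$, so $\Psi_{P}$ is not linear in $P$ and the splitting into a $P$-part and a $G$-part must be done with care; moreover the radius-$k$ reflection carries the polar-derivative points $\alpha_{j}$ into $k^{2}\alpha_{j}$ (since $D_{k^{2}\alpha}\bigl(k^{n}Q(\cdot/k^{2})\bigr)(z)=k^{n}(D_{\alpha}Q)(z/k^{2})$), and one must invoke Lemma \ref{l1} repeatedly to certify that at each stage the zeros remain in the regions required to justify the domination inequalities and the limit $|\lambda|\to1$.
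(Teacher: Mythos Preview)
Your overall strategy matches the paper's: perturb $P$ by $\lambda Mz^{n}/k^{n}$ to force all zeros into $|z|<k$, invoke Lemma~\ref{l4}, and recover the two-term sum by a phase choice and the limit $|\lambda|\to1$. But the middle paragraph, where you pass from information about $F=P-\lambda G$ to a pointwise comparison between $\Phi_{F}$ and $\Psi_{F}$, is where the real work lies, and your outline does not close that gap.

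First, the direction of one of your domination claims is reversed: if $F$ has all its zeros in $|z|\le k$, then on $|z|=k$ one has $|F(z)|=|k^{n}G(z/k^{2})|$ with $G(z)=z^{n}\overline{F(1/\bar z)}$, and by the maximum principle $|F(z)|\ge k^{n}|G(z/k^{2})|$ for $|z|\ge k$, not the other way around. Second, and more importantly, knowing separately that $|z^{s}F_{s}(z)|\ge\gamma|F(z)|$ on $|z|=1$ and that $F$ dominates its $k$-reflection does \emph{not} by itself yield the needed inequality
\[
k^{n}\Bigl|z^{s}G_{s}(z/k^{2})+\beta\gamma\,G(z/k^{2})\Bigr|\le\Bigl|z^{s}F_{s}(z)+\beta\gamma\,F(z)\Bigr|\qquad(|z|\ge1),
\]
because the left side mixes $G_{s}$ and $G$ with the \emph{same} $\beta$, and there is no direct relation between $G_{s}$ and $F_{s}$ coming from the two facts you cite. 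The paper handles this by introducing a \emph{second} Rouch\'e parameter: for $|\delta|>1$ the polynomial $H(z)=k^{n}G(z/k^{2})+\delta F(z)$ again has all zeros in $|z|<k$, so Lemma~\ref{l4} applied to $H$ (not to $F$) gives $|z^{s}H_{s}|\ge\gamma|H|$ on $|z|=1$; Rouch\'e then places all zeros of $z^{s}H_{s}+\beta\gamma H$ in $|z|<1$, and the usual contradiction argument with $\delta$ produces exactly the displayed inequality. Without this second parameter your ``combining these domination relations'' step has no content.

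Once that inequality is in hand, your final phase-alignment idea is essentially what the paper does: expand $F=P+\lambda Mz^{n}/k^{n}$ and $G=Q+\bar\lambda M/k^{n}$, choose $\arg\lambda$ so the $P$-side becomes a difference (this is where Corollary~\ref{c1} is invoked to guarantee the choice is admissible), bound the $Q$-side below by the triangle inequality, and let $|\lambda|\to1$. So the repair is precisely to insert the $\delta$-step and apply Lemma~\ref{l4} to $H$ rather than to $F$.
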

\begin{proof}
Let $M=\underset{|z|=k}{Max}|P(z)|.$ Therefore, for every $\lambda$ with $|\lambda|>1,$   $|P(z)|<|\lambda Mz^n/k^n|$ on $|z|=k.$ By Rouche's theorem it follows that all the zeros of $F(z)=P(z)+\lambda Mz^n/k^n$ lie in $|z|<k.$ If $G(z)=z^n\overline{F(1/\overline{z})}$ then $|k^nG(z/k^2)|=|F(z)|$ for $|z|=k$ and hence for any $\delta$ with $|\delta|>1,$ the polynomial $H(z)= k^nG(z/k^2)+\delta F(z)$ has all its zeros in $|z|<k.$ By applying Lemma \ref{l4} to $H(z),$ we have for $\alpha_1,\alpha_2,\cdots,\alpha_s\in\mathbb{C}$ with $|\alpha_1|> k,|\alpha_2|> k,\cdots,|\alpha_s|> k,$ $(1\leq t<n),$ 
\begin{equation*}
|z^sH_s(z)|\geq\dfrac{n_s \Lambda_s}{(1+k)^s}|H(z)| \,\,\,\,\,\textnormal{for}\,\,\,\,|z|=1.
\end{equation*}
Therefore, for any $\beta$ with $|\beta|<1$ and $|z|=1,$ we have
\begin{equation*}
|z^sH_s(z)|>|\beta|\dfrac{n_s \Lambda_s}{(1+k)^s}|H(z)| .
\end{equation*}
Since $|\alpha_j|> k,$ $j=1,2,\cdots,s$ by Lemma \ref{l1} the polynomial $ z^sH_s(z)$ has all its zeros in $|z|<1$ and by Rouche's theorem, the polynomial 
$$ T(z)=z^sH_s(z)+\beta\dfrac{n_s \Lambda_s}{(1+k)^s} H(z)  $$
 has all its zeros in $|z|<1.$ Replacing $H(z)$ by $k^nG(z/k^2)+\delta F(z),$ we conclude that the polynomial
$$ T(z)=k^n\left\{ z^sG_s(z/k^2)+\dfrac{\beta n_s \Lambda_s}{(1+k)^s} G(z/k^2)\right\}+\delta\left\{ z^sF_s(z)+\dfrac{\beta n_s \Lambda_s}{(1+k)^s} F(z)\right\}$$
has all its zero in $|z|<1.$ This gives for $|\beta|<1,$ $|\alpha_j|\geq k,$ where $j=1,2,\cdots,t$ and $|z|\geq 1.$
\begin{equation}\label{le34}
k^n\left|z^sG_s(z/k^2)+\dfrac{\beta n_s \Lambda_s}{(1+k)^s} G(z/k^2)\right|\leq \left|z^sF_s(z)+\dfrac{\beta n_s \Lambda_s}{(1+k)^s} F(z)\right|
\end{equation}
If inequality \eqref{le34} is not true, then there is a point $z_0$ with $|z_0|\geq 1$ such that
\begin{equation*}
k^n\left|z_0^sG_s(z_0/k^2)+\dfrac{\beta n_s \Lambda_s}{(1+k)^s} G(z_0/k^2)\right|> \left|z_0^sF_s(z_0)+\dfrac{\beta n_s \Lambda_s}{(1+k)^s} F(z_0)\right|
\end{equation*}
Since all the zeros of $F(z)$ lie in $|z|<k,$ proceeding similarly as in case of $H(z),$ it follows that the polynomial $z^sF_s(z)+\dfrac{\beta n_s \Lambda_s}{(1+k)^s} F(z)$ has all its zeros in $|z|< 1,$ and hence $z_0^sF_s(z_0)+\dfrac{\beta n_s \Lambda_s}{(1+k)^s} F(z_0)\neq 0.$ Now, choosing
$$\delta=-\dfrac{k^n\left\{z_0^sG_s(z_0/k^2)+\dfrac{\beta n_s \Lambda_s}{(1+k)^s} G(z_0/k^2)\right\}}{z_0^sF_s(z_0)+\dfrac{\beta n_s \Lambda_s}{(1+k)^s} F(z_0)}.    $$
then $\delta$ is a well-defined real or complex number, with $|\delta|>1$ and $T(z_0)=0,$ which contradicts the fact that $T(z)$ has all zeros in $|z|<1.$ Thus, \eqref{le34} holds. Now, replacing $F(z)$ by $P(z)+\lambda Mz^n/k^n$ and $G(z)$ by $Q(z)+\overline{\lambda}M/k^n$ in \eqref{le34}, we have for $|z|\geq 1,$
\begin{align}\nonumber\label{le35}
k^n\Bigg|z^s&Q_s(z/k^2)+\dfrac{\beta n_s \Lambda_s}{(1+k)^s} Q(z/k^2)+\dfrac{\overline{\lambda}n_s}{k^n}\left\{z^s+\dfrac{\beta \Lambda_s}{(1+k)^s}\right\}M\Bigg|\\&\leq\Bigg|z^sP_s(z)+\dfrac{\beta n_s \Lambda_s}{(1+k)^s} P(z)+\dfrac{\lambda n_s}{k^n}\left\{\alpha_1\alpha_2\cdots\alpha_s+\dfrac{\beta \Lambda_s}{(1+k)^s} \right\}Mz^n\Bigg|.
\end{align}
Choosing the argument of $\lambda$ in the right hand side of \eqref{le35} such that
\begin{align*}
\Bigg|z^s&P_s(z)+\dfrac{\beta n_s \Lambda_s}{(1+k)^s} P(z)+\dfrac{\lambda n_s}{k^n}\left\{\alpha_1\alpha_2\cdots\alpha_s+\dfrac{\beta \Lambda_s}{(1+k)^s} \right\}Mz^n\Bigg|\\=&\Bigg|\dfrac{\lambda n_s}{k^n}\left\{\alpha_1\alpha_2\cdots\alpha_s+\dfrac{\beta \Lambda_s}{(1+k)^s} \right\}Mz^n\Bigg|-\Bigg|z^sP_s(z)+\dfrac{\beta n_s \Lambda_s}{(1+k)^s} P(z)\Bigg|.
\end{align*}
which is possible by Corollary \ref{c1}, then for $|z|\geq 1,$ we have
\begin{align}\nonumber\label{le36}
\Bigg|z^s&P_s(z)+\dfrac{\beta n_s \Lambda_s}{(1+k)^s} P(z)\Bigg|+k^n\Bigg|z^sQ_s(z/k^2)+\dfrac{\beta n_s \Lambda_s}{(1+k)^s} Q(z/k^2)\Bigg|\\\leq&|\lambda|n_s\Bigg[\dfrac{|z^n| }{k^n}\Bigg|\alpha_1\alpha_2\cdots\alpha_s+\dfrac{\beta \Lambda_s}{(1+k)^s} \Bigg|+\Bigg|z^s+\dfrac{\beta \Lambda_s}{(1+k)^s}\Bigg|\Bigg]M
\end{align}
Making $|\lambda|\rightarrow 1$ and using the continuity for $|\beta|=1$ and $|\alpha_j|=k,$ where $j=1,2,\cdots,t,$ in \eqref{le36}, we get the inequality \eqref{le5}.
\end{proof}
\section{\textbf{Proof of Theorems}}
\begin{proof}[\textnormal{\textbf{Proof of Theorem \ref{t1}}}]
By hypothesis $F(z)$ is a polynomial of degree $n$  having all in zeros in the closed disk $|z|\leq k $ and $P(z)$ is a polynomial of degree $n$ such that 
  \begin{equation}\label{t1e1}
  |P(z)|\leq |F(z)|\,\,\,\, \textrm{for} \,\,\,\,|z|= k,
  \end{equation}
  therefore, if $F(z)$ has a zero of multiplicity $s$ at $z=ke^{i\theta_{0}}$, then $P(z)$ has a zero of multiplicity at least $s$ at $z=ke^{i\theta_{0}}$. If $P(z)/F(z)$ is a constant, then inequality \eqref{t1e1} is obvious. We now assume that $P(z)/F(z)$ is not a constant, so that by the maximum modulus principle, it follows that\\
  \[|P(z)|<|F(z)|\,\,\,\textrm{for}\,\, |z|>k\,\,.\]
  Suppose $F(z)$ has $m$ zeros on $|z|=k$ where $0\leq m < n$, so that we can write\\
  \[F(z) = F_{1}(z)F_{2}(z)\]
  where $F_{1}(z)$ is a polynomial of degree $m$ whose all zeros lie on $|z|=k$ and $F_{2}(z)$ is a polynomial of degree exactly $n-m$ having all its zeros in $|z|<k$. This implies with the help of inequality \eqref{t1e1} that\\
  \[P(z) = P_{1}(z)F_{1}(z)\]
  where $P_{1}(z)$ is a polynomial of degree at most $n-m$. Again, from inequality \eqref{t1e1}, we have
  \[|P_{1}(z)| \leq |F_{2}(z)|\,\,\,for \,\, |z|=k\,\]
  where $F_{2}(z) \neq 0 \,\, for\,\, |z|=k$. Therefore
   for every real or complex number $\lambda $ with $|\lambda|>1$, a direct application of Rouche's theorem shows that the zeros of the polynomial $P_{1}(z)- \lambda F_{2}(z)$ of degree $n-m \geq 1$ lie in $|z|<k$ hence the polynomial 
   \[G(z) = F_{1}(z)\left(P_{1}(z) - \lambda F_{2}(z)\right)=P(z) - \lambda F(z)\]
   has all its zeros in $|z|\leq k.$ Therefore for $r> 1,$ all the zeros of $G(rz)$ lie in $|z|\leq k/r< k$
   By applying Lemma \ref{l4} to the polynomial $G(rz),$ then for $|z|=1,$ we have
   \begin{equation*}
|z^sG_s(rz)|\geq\dfrac{n_s \Lambda_s}{(1+k)^s}|G(rz)|.
   \end{equation*}
   Equivalently for $|z|=1,$ we have
   \begin{equation}
  |z^sP_s(rz)-\lambda z^s F_s(rz)|\geq\dfrac{n_s \Lambda_s}{(1+k)^s}|P(rz)-\lambda F(rz)|.
      \end{equation}
      Therefore, we have for any $\beta$ with $|\beta|<1$ and $|z|=1,$
 \begin{equation}
|z^sP_s(rz)-\lambda z^sF_s(rz)|>\dfrac{n_s \Lambda_s}{(1+k)^s}|\beta||P(rz)-\lambda F(rz)|.
\end{equation}
Since $|\alpha_j|>k,$ $j=1,2,\cdots,t$ by Rouche's theorem, the polynomial
\begin{align*}
 T(rz)&= \left\{ z^sP_s(rz)-\lambda z^sF_s(rz)\right\}+\dfrac{n_s \Lambda_s}{(1+k)^s}\beta \left\{P(rz)-\lambda F(rz)\right\}\\&     
= z^sP_s(rz)+\dfrac{n_s \Lambda_s\beta}{(1+k)^s}P(rz)- \lambda\left\{z^sF_s(rz)+\dfrac{n_s \Lambda_s\beta}{(1+k)^s}F(rz) \right\}
\end{align*}
has all zeros in $|z|< 1.$ 
This implies for $|z|\geq 1$
\begin{equation}\label{t1p1}
|z^sP_s(rz)+\dfrac{n_s \Lambda_s\beta}{(1+k)^s}P(rz)|\leq |z^sF_s(rz)+\dfrac{n_s \Lambda_s\beta}{(1+k)^s}F(rz)|
\end{equation}
If it is true, then we have for some $z_0$ with $|z_0|\geq k,$
\begin{equation}
|z_0^sP_s(rz_0)+\dfrac{n_s \Lambda_s\beta}{(1+k)^s}P(rz_0)|> |z_0^sF_s(rz_0)+\dfrac{n_s \Lambda_s\beta}{(1+k)^s}F(rz_0)|
\end{equation}
Since $z_0^sP_s(rz_0)+\dfrac{n_s \Lambda_s\beta}{(1+k)^s}P(rz_0)\neq 0,$ we can choose 
$$  \lambda=\dfrac{z_0^sF_s(rz_0)+\dfrac{n_s \Lambda_s\beta}{(1+k)^s}F(rz_0)}{z_0^sP_s(rz_0)+\dfrac{n_s \Lambda_s\beta}{(1+k)^s}P(rz_0)}    $$
$\lambda$ is well-defined real or complex number with $|\lambda|<1$ and with this choice of $\lambda,$ and $T(rz_0)=0$ for $|z_0|\geq k.$ But this is a contradiction to the fact that $T(rz)\neq 0$ for $|z|\geq 1.$ Thus \eqref{t1p1} holds. Letting $r\rightarrow 1,$ in \eqref{t1p1}, we get the desired result.
\end{proof}
\begin{proof}[\textnormal{\textbf{Proof of Theorem \textbf{\ref{t2}}}}]
 Let  $Q(z)=z^{n}\overline{P(1/\overline{z})}.$ Since $ P(z) $ does not vanish in the disk $ |z|<k,\,\, k\leq 1 $, then the polynomial $Q(z/k^2)$ has all zeros in $|z|\leq k.$  Applying Theorem \ref{t1} to $k^nQ(z/k^2)$ and noting that $|P(z)|=|k^nQ(z/k^2)|$ for $|z|=k,$ we have for all $ \alpha_j,\beta\in\mathbb{C} $ with $ |\alpha_j|\geq 1,$ $j=1,2,\cdots,t,$ $|\beta|\leq 1 ,$ and $ |z|\geq 1 ,$  
 \begin{align}\label{t2e1}
 \left|z^sP_s(z)+\beta\dfrac{n_s \Lambda_s}{(1+k)^s}P(z)\right|\leq k^n\left|z^sQ_s(z/k^2)+\beta\dfrac{n_s \Lambda_s}{(1+k)^s}Q(z/k^2)\right|.
 \end{align}
 Inequality \eqref{t2e1} in conjunction with Lemma \ref{l5} gives for all $ \alpha_j,\beta\in\mathbb{C} $ with $ |\alpha_j|\geq k,$ $j=1,2,\cdots,t$  $|\beta|\leq 1, $  and $ |z|\geq 1 ,$ 
\begin{align*}
2\Bigg|&z^sP_s(z)+\beta\dfrac{n_s \Lambda_s}{(1+k)^s}P(z)\Bigg|\\&\leq  \left|z^sP_s(z)+\beta\dfrac{n_s \Lambda_s}{(1+k)^s}P(z)\right|+ k^n\left|z^sQ_s(z/k^2)+\beta\dfrac{n_s \Lambda_s}{(1+k)^s}Q(z/k^2)\right|\\&\leq n_s\left\{\dfrac{|z^n| }{k^n}\Bigg|\alpha_1\alpha_2\cdots\alpha_s+\dfrac{\beta \Lambda_s}{(1+k)^s} \Bigg|+\Bigg|z^s+\dfrac{\beta \Lambda_s}{(1+k)^s}\Bigg|\right\}\underset{|z|=k}{Max}|P(z)|,
\end{align*}
which is equivalent to \eqref{te2}.
\end{proof}
\begin{proof}[\textnormal{\textbf{Proof of Theorem \ref{t3}}}]
Let $m=Min_{|z|=k}|P(z)|.$ If $P(z)$ has a zero on $|z|=k,$ then $m=0$ and result follows from Theorem \ref{t2}. Therefore,  We assume that $ P(z) $ has all its zeros in $ |z|>k $ where $ k\leq 1 $ so that $ m > 0 $. Now for every  $ \lambda $ with $ |\lambda|<1 $, it follows by Rouche's theorem that $ h(z)=P(z)-\lambda m $ does not vanish in $ |z|<k $. Let $g(z)=z^n\overline{h(1/\overline{z})}=z^n\overline{P(1/\overline{z})}-\overline{\lambda}mz^n=Q(z)-\overline{\lambda}mz^n$ then, the polynomial $g(z/k^2)$ has all its zeros in $|z|\leq k.$ As $|k^ng(z/k^2)|=|h(z)|$ for $|z|=k,$ applying Theorem \ref{t1} to $k^ng(z/k^2),$ we get for $\alpha_1,\alpha_2,\cdots,\alpha_s,\beta\in\mathbb{C},$ with $|\alpha_j|\geq k,$ $j=1,2,\cdots,t$ $|\beta|\leq 1$ and $|z|\geq 1,$
\begin{align}\label{t3e1}
\left|z^sh_s(z)+\beta\dfrac{n_s \Lambda_s}{(1+k)^s}h(z)\right|\leq k^n\left|z^sg_s(z/k^2)+\beta\dfrac{n_s \Lambda_s}{(1+k)^s}g(z/k^2)\right|.
\end{align}
Equivalently for $|z|\geq 1,$ we have
\begin{align}\nonumber\label{t3e2}
\Bigg|z^s&P_s(z)+\dfrac{\beta n_s \Lambda_s}{(1+k)^s} P(z)-\lambda n_s\left\{z^s+\dfrac{\beta \Lambda_s}{(1+k)^s}\right\}m\Bigg|\\&\leq k^n\Bigg|z^sQ_s(z/k^2)+\dfrac{\beta n_s \Lambda_s}{(1+k)^s} Q(z/k^2)-\dfrac{\overline{\lambda}n_s}{k^{2n}}\left\{\alpha_1\alpha_2\cdots\alpha_s+\dfrac{\beta \Lambda_s}{(1+k)^s}\right\}mz^n\Bigg|.
\end{align}
Since $Q(z/k^2)$ has all its zeros in $|z|\leq k$ and $k^n\underset{|z|=k}{Min}|Q(z/k^2)|=\underset{|z|=k}{Min}|P(z)|,$ by Corollary \ref{c4} applied to $Q(z/k^2),$ we have for $|z|\geq 1,$
\begin{align}\nonumber\label{t3e3}
\Bigg|z^sQ_s(z/k^2)+\beta\dfrac{n_s \Lambda_s}{(1+k)^s}Q(z/k^2)\Bigg|&\geq \dfrac{n_s}{k^n}\left|\alpha_1\alpha_2\cdots\alpha_s+\dfrac{\beta \Lambda_s}{(1+k)^s}\right|\underset{|z|=k}{Min}|Q(z)|\\\nonumber&= \dfrac{n_s}{k^{2n}}\left|\alpha_1\alpha_2\cdots\alpha_s+\dfrac{\beta \Lambda_s}{(1+k)^s}\right|\underset{|z|=k}{Min}|P(z)|\\&= \dfrac{n_s}{k^{2n}}\left|\alpha_1\alpha_2\cdots\alpha_s+\dfrac{\beta \Lambda_s}{(1+k)^s}\right|m.
\end{align}
Now, choosing the argument of $\lambda$ on the right hand side of inequality \eqref{t3e2} such that
\begin{align*}
 k^n&\Bigg|z^sQ_s(z/k^2)+\dfrac{\beta n_s \Lambda_s}{(1+k)^s} Q(z/k^2)-\dfrac{\overline{\lambda}n_s}{k^{2n}}\left\{\alpha_1\alpha_2\cdots\alpha_s+\dfrac{\beta \Lambda_s}{(1+k)^s}\right\}mz^n\Bigg|\\&=k^n\Bigg|z^sQ_s(z/k^2)+\dfrac{\beta n_s \Lambda_s}{(1+k)^s} Q(z/k^2)\Bigg|-\dfrac{|\overline{\lambda}|n_s}{k^{n}}\Bigg|\left\{\alpha_1\alpha_2\cdots\alpha_s+\dfrac{\beta \Lambda_s}{(1+k)^s}\right\}mz^n\Bigg|,
\end{align*}
which is possible by inequality \eqref{t3e3}, we get for $|z|\geq 1,$
\begin{align*}
\Bigg|z^s&P_s(z)+\dfrac{\beta n_s \Lambda_s}{(1+k)^s} P(z)\Bigg|-|\lambda| n_s\Bigg|z^s+\dfrac{\beta \Lambda_s}{(1+k)^s}\Bigg|m\\&=k^n\Bigg|z^sQ_s(z/k^2)+\dfrac{\beta n_s \Lambda_s}{(1+k)^s} Q(z/k^2)\Bigg|-\dfrac{|\lambda||z|^nn_s}{k^{n}}\Bigg|\alpha_1\alpha_2\cdots\alpha_s+\dfrac{\beta \Lambda_s}{(1+k)^s}\Bigg|m,
\end{align*}
Letting $\lambda\rightarrow 1,$ we have for $|z|\geq 1,$
\begin{align}\nonumber\label{t3e4}
\Bigg|z^s&P_s(z)+\dfrac{\beta n_s \Lambda_s}{(1+k)^s} P(z)\Bigg|-k^n\Bigg|z^sQ_s(z/k^2)+\dfrac{\beta n_s \Lambda_s}{(1+k)^s} Q(z/k^2)\Bigg|\\&\leq n_s\left\{\Bigg|z^s+\dfrac{\beta \Lambda_s}{(1+k)^s}\Bigg|-\dfrac{|z|^n}{k^{n}}\Bigg|\alpha_1\alpha_2\cdots\alpha_s+\dfrac{\beta \Lambda_s}{(1+k)^s}\Bigg|\right\}m
\end{align}
Adding \eqref{le5} and \eqref{t3e4}, we get for $|z|\geq 1$
\begin{align*}
2\Bigg|z^s&P_s(z)+\dfrac{\beta n_s \Lambda_s}{(1+k)^s} P(z)\Bigg|\\&\leq  n_s\Bigg[\left\{\dfrac{|z^n| }{k^n}\Bigg|\alpha_1\alpha_2\cdots\alpha_s+\dfrac{\beta \Lambda_s}{(1+k)^s} \Bigg|+\Bigg|z^s+\dfrac{\beta \Lambda_s}{(1+k)^s}\Bigg|\right\}\underset{|z|=k}{Max}|P(z)|\\&+\left\{\Bigg|z^s+\dfrac{\beta \Lambda_s}{(1+k)^s}\Bigg|-\dfrac{|z|^n}{k^{n}}\Bigg|\alpha_1\alpha_2\cdots\alpha_s+\dfrac{\beta \Lambda_s}{(1+k)^s}mz^n\Bigg|\right\}m\Bigg],
\end{align*}
which is equivalent to \eqref{te3}. This completes the proof of Theorem \ref{t3}.
\end{proof} 

\end{document}